\theoremstyle{plain}
\newtheorem{theorem}{Theorem}
\newtheorem*{theorem*}{Theorem}
\newtheorem{corollary}[theorem]{Corollary}
\newtheorem*{corollary*}{Corollary}
\newtheorem{lemma}[theorem]{Lemma}
\newtheorem*{lemma*}{Lemma}
\newtheorem*{porism*}{Porism}
\newtheorem{proposition}[theorem]{Proposition}
\newtheorem*{proposition*}{Proposition}
\newtheorem{conjecture}[theorem]{Conjecture}
\newtheorem*{conjecture*}{Conjecture}
\theoremstyle{definition}
\newtheorem{definition}[theorem]{Definition}
\newtheorem*{definition*}{Definition}
\newtheorem*{example*}{Example}
\newtheorem{problem}[theorem]{Problem}
\newtheorem*{problem*}{Problem}
\theoremstyle{remark}
\newtheorem*{remark*}{Remark}
\newcommand{\n}{\mathbf{n}} 
\newcommand{\nc}{\mathrm{nc}} 
\newcommand{\fish}{\mathfrak{f}} 
\newcommand{\sym}{\mathcal{S}} 
\newcommand{\M}{\mathcal{M}} 
\newcommand{\N}{\mathcal{N}} 
\newcommand{\F}{\mathcal{F}} 
\newcommand{\T}{\mathcal{T}} 
\newcommand{\al}{\alpha}
\newcommand{\la}{\lambda}
\newcommand{\I}{\mathcal{I}} 
\newcommand{\ew}{\ensuremath{\epsilon}} 
\newcommand{\void}{\ensuremath{\emptyset}} 
\newcommand{\tpt}{\ensuremath{(\mathbf{2}+\mathbf{2})}}
\DeclareMathOperator{\lne}{lne} 
\DeclareMathOperator{\rne}{rne} 
\DeclareMathOperator{\lcr}{lcr} 
\DeclareMathOperator{\rcr}{rcr} 
\DeclareMathOperator{\inv}{inv}
\DeclareMathOperator{\asc}{asc}
\DeclareMathOperator{\des}{des}
\DeclareMathOperator{\comp}{comp}
\DeclareMathOperator{\lmin}{lmin}
\DeclareMathOperator{\rmin}{rmin}
\DeclareMathOperator{\lmax}{lmax}
\DeclareMathOperator{\rmax}{rmax}
\DeclareMathOperator{\last}{last}
\DeclareMathOperator{\emb}{emb}   
\DeclareMathOperator{\lev}{lev}   
\DeclareMathOperator{\dent}{dent} 
\DeclareMathOperator{\inter}{int} 
\DeclareMathOperator{\ip}{ip}     
\DeclareMathOperator{\Pre}{Pred} 
\DeclareMathOperator{\Suc}{Succ} 
\DeclareMathOperator{\pre}{pred} 
\DeclareMathOperator{\suc}{succ} 
\newcommand{\PATTERN}{
    \draw[step=1, xshift=14pt, yshift=14pt, \cfill, line cap=round] (0,0) grid (3,3);  
    \draw[step=1, xshift=14pt, yshift=14pt, thick] (0,1) -- (3,1);  
    \draw[step=1, xshift=14pt, yshift=14pt, thick] (1,0) -- (1,3);  
    \foreach \x/\y in {1/2,2/3,3/1} \node[disc, fill=black] at (\x,\y) {};  
}
\newcommand{\pattern}{\!\raisebox{-0.5em}{
  \begin{tikzpicture}[line width=0.7pt, scale=0.15]
    \tikzstyle{disc} = [circle,thin,draw=black, minimum size=1.7pt, inner sep=0pt ]
    \PATTERN
  \end{tikzpicture}}
}
\newcommand{\Pattern}{\raisebox{-0.5em}{
  \begin{tikzpicture}[line width=0.9pt, scale=0.3]
    \tikzstyle{disc} = [circle,thin,draw=black, minimum size=4pt, inner sep=0pt ]
    \PATTERN
  \end{tikzpicture}}
}
\newcommand{\cfill}{black!40}
\newcommand{\cfilll}{white}
\newcommand{\ns}{4pt}
\newcommand{\nodestyle}{\tikzstyle{every node} = [font=\footnotesize]}
\newcommand{\discstyle}{\tikzstyle{disc} = 
  [ circle,thin,fill=\cfilll,draw=black, minimum size=\ns, inner sep=0pt ] }
\newcommand{\style}{
  \nodestyle
  \discstyle
}
\newcommand{\pA}[2]{
  \begin{tikzpicture}[line width=#1, scale=#2]
    \style
    \node [disc] (r1)   at ( 0,0) {};
    \node [disc] (r11)  at ( 0,-1) {};
    \node [disc] (r111) at ( 0,-2) {};
    \draw 
    (r1) node[left=1pt] {3} -- (r11) node[left=1pt] {2} 
    -- (r111) node[left=1pt] {1}; 
  \end{tikzpicture}
}
\newcommand{\pB}[2]{
  \begin{tikzpicture}[line width=#1, scale=#2]
    \style
    \node [disc] (r1)  at (    0,-1) {};
    \node [disc] (r11) at (    0,-2) {};
    \node [disc] (r2)  at ( 0.75,-2) {};
    \draw (r1) node[above=1pt] {2} -- (r11) node[below=1pt] {1};
    \draw (r2) node[below=1pt] {3}; 
  \end{tikzpicture}
}
\newcommand{\pC}[2]{
  \begin{tikzpicture}[line width=#1, scale=#2]
    \style
    \node [disc] (r1)  at (    0,-1) {};
    \node [disc] (r11) at (    0,-2) {};
    \node [disc] (r2)  at ( 0.75,-2) {};
    \draw (r1) node[above=1pt] {3} -- (r11) node[below=1pt] {1};
    \draw (r2) node[below=1pt] {2}; 
  \end{tikzpicture}
}
\newcommand{\pD}[2]{
  \begin{tikzpicture}[line width=#1, scale=#2]
    \style
    \node [disc] (r1)  at ( 0,-1) {};
    \node [disc] (r11) at (-0.5,-2) {};
    \node [disc] (r12) at ( 0.5,-2) {};
    \draw 
    (r1) node[above=1pt] {3} -- (r11) node[below=1pt] {1} 
    (r1)                     -- (r12) node[below=1pt] {2}; 
  \end{tikzpicture}
}
\newcommand{\pE}[2]{
  \begin{tikzpicture}[line width=#1, scale=#2]
    \style
    \node [disc] (r1)  at ( 0,-2) {};
    \node [disc] (r11) at (-0.5,-1) {};
    \node [disc] (r12) at ( 0.5,-1) {};
    \draw 
    (r1) node[below=1pt] {1} -- (r11) node[above=1pt] {2} 
    (r1)                     -- (r12) node[above=1pt] {3}; 
  \end{tikzpicture}
}
\newcommand{\pF}[2]{
  \begin{tikzpicture}[line width=#1, scale=#2]
    \style
    \node [disc] (r1) at (-0.6,0) {};
    \node [disc] (r2) at (   0,0) {};
    \node [disc] (r3) at ( 0.6,0) {};
    \draw (r1) node[below=1pt] {1};
    \draw (r2) node[below=1pt] {2};
    \draw (r3) node[below=1pt] {3};
  \end{tikzpicture}
}
\title{$n!$ matchings, $n!$ posets}
\author[A. Claesson]{Anders Claesson}
\address{The Mathematics Institute, School of Computer Science,
Reykjavik University, Menntavegi 1, 101 Reykjavik, Iceland}
\author[S. Linusson]{Svante Linusson}
\address{Department of Mathematics, KTH-Royal Institute of Technology,
SE-100 44 Stockholm, Sweden}
\thanks{Claesson was supported by grant no. 090038011 
  from the Icelandic Research Fund. Linusson is a Royal Swedish Academy of Sciences 
  Research Fellow supported by a grant from the Knut and Alice Wallenberg 
  Foundation.
}
\subjclass[2000]{Primary 05A05 05A15}
\begin{document}

\begin{abstract}
  We show that there are $n!$ matchings on $2n$ points without, so
  called, left (neighbor) nestings. We also define a set of naturally
  labeled $\tpt$-free posets, and show that there are $n!$ such posets
  on $n$ elements. Our work was inspired by 
  Bousquet-M\'elou,
  Claesson, Dukes and Kitaev [J. Combin. Theory Ser. A. 117 (2010) 884--909]. 
  They gave bijections
  between four classes of combinatorial objects: matchings with no
  neighbor nestings (due to Stoimenow), unlabeled $\tpt$-free posets,
  permutations avoiding a specific pattern, and so called ascent
  sequences. We believe that certain statistics on our matchings and
  posets could generalize the work of Bousquet-M\'elou et al.\ and we
  make a conjecture to that effect. We also identify natural subsets
  of matchings and posets that are equinumerous to the class of
  unlabeled $\tpt$-free posets.
  
  We give bijections that show the equivalence of (neighbor)
  restrictions on nesting arcs with (neighbor) restrictions on
  crossing arcs. These bijections are thought to be of independent
  interest. One of the bijections maps via certain upper-triangular
  integer matrices that have recently been studied by Dukes and
  Parviainen [Electron. J. Combin. 17 (2010) \#R53]
\end{abstract}

\maketitle\thispagestyle{empty}

\section{Introduction}

A \emph{matching} of the integers $\{1,2,\ldots , 2n\}$ is a
partition of that set into blocks of size 2.
An example of a matching is
$$M=\{(1,3), (2,7), (4,6), (5,8)\}.
$$ 
In the diagram below there is an \emph{arc} connecting $i$ with
$j$ precisely when $(i,j)\in M$.\medskip
\begin{center}
  \begin{tikzpicture}[line width=.5pt, scale=0.45, font=\small]  
  \filldraw   (1,0) circle (1.9pt); \node at (1,-0.7) {1};
  \filldraw   (2,0) circle (1.9pt); \node at (2,-0.7) {2};
  \filldraw   (3,0) circle (1.9pt); \node at (3,-0.7) {3};
  \filldraw   (4,0) circle (1.9pt); \node at (4,-0.7) {4};
  \filldraw   (5,0) circle (1.9pt); \node at (5,-0.7) {5};
  \filldraw   (6,0) circle (1.9pt); \node at (6,-0.7) {6};
  \filldraw   (7,0) circle (1.9pt); \node at (7,-0.7) {7};
  \filldraw   (8,0) circle (1.9pt); \node at (8,-0.7) {8};
   \draw[very thin] (0.5,0) -- (8.500000,0);
  \draw (3,0) arc (0:180:1.0000);
  \draw (6,0) arc (0:180:1.0000);
  \draw (7,0) arc (0:180:2.5000);
  \draw (8,0) arc (0:180:1.5000);
  \end{tikzpicture}\vspace{-0.5ex}
\end{center}
A \emph{nesting} of $M$ is a pair of arcs
$(i,\ell)$ and $(j,k)$ with $i<j<k<\ell$:\medskip
\begin{center}
  \begin{tikzpicture}[line width=.5pt, scale=0.46, font=\small]
    \filldraw (1,0) circle (1.9pt); 
    \filldraw (2,0) circle (1.9pt); 
    \filldraw (3,0) circle (1.9pt); 
    \filldraw (4,0) circle (1.9pt); 
    \node at (1,-0.7) {$i$};
    \node at (2,-0.7) {$j$};
    \node at (3,-0.7) {$k$};
    \node at (4,-0.7) {$\ell$};
    \draw[very thin,dashed] (0.5,0) -- (4.5,0);
    \draw (3,0) arc (0:180:0.5000);
    \draw (4,0) arc (0:180:1.5000);
  \end{tikzpicture}\vspace{-0.6ex}
\end{center}
We call such a nesting a \emph{left-nesting} if $j=i+1$. Similarly, we
call it a \emph{right-nesting} if $\ell=k+1$. The example matching has
one nesting, formed by the two arcs $(2,7)$ and $(4,6)$. It is a
right-nesting.

To give upper bounds on the dimension of the space of Vassiliev's knot
invariants of a given degree, Stoimenow~\cite{stoim} was led to
introduce what he calls regular linearized chord diagrams. In the
terminology of this paper, Stoimenow's diagrams are matchings with no
\emph{neighbor nestings}, that is, matchings with neither
left-nestings, nor right-nestings. Following Stoimenow's paper,
Zagier~\cite{zagier} derived the following beautiful generating
function enumerating such matchings with respect to size:
$$\sum_{n\ge 0}\prod_{i=1}^{n} \left( 1-(1-t)^i\right).
$$

Recently, Bousquet-M\'elou et al.~\cite{BCDK} gave bijections between
matchings on $[2n]$ with no neighbor nestings and three other classes
of combinatorial objects, thus proving that they are equinumerous. The
other classes were unlabeled \tpt-free posets (or interval orders) on
$n$ nodes; permutations on $[n]$ avoiding the pattern \mbox{\pattern;} and
ascent sequences of length $n$. Let $\fish_n$ be the cardinality of
any, and thus all, of the above classes---it is the coefficient in
front of $t^n$ in Zagier's generating function. We call $\fish_n$ the
$n$th \emph{Fishburn number}; the sequence starts 
$$1, 1, 2, 5, 15, 53, 217, 1014, 5335, 31240,\dots
$$
Fishburn~\cite{fishburn-book, fishburn,
  fishburn2} did pioneering work on interval orders; for instance, he
showed the basic theorem that a poset is an interval order if and only
if it is {\tpt}-free.

The pattern avoiding permutations and the ascent sequences were both
defined by Bousquet-M\'elou et al. We shall recall those definitions
here. In a permutation $\pi=a_1\dots a_n$, an occurrence of the
pattern
$$\Pattern
$$
is a subsequence $a_ia_{i+1}a_j$ such that
$a_j+1=a_i<a_{i+1}$. As an example, the permutation $\pi = 351426$
contains one such occurrence:
$$
\begin{tikzpicture}[line width=1.1pt, scale=0.3]
  \style
  \draw[step=1, xshift=14pt, yshift=14pt, \cfill, line cap=round]
  (0,0) grid (6,6);  
  \foreach \x/\y in {1/3,2/5,3/1,4/4,5/2,6/6}
  \node[disc] (\y) at (\x,\y) {};  
  \foreach \y in {3,5,2}
  \node[disc,fill=black] at (\y) {};  
\end{tikzpicture}
$$ If $\pi$ contains no such occurrence we say that $\pi$
\emph{avoids} the pattern. An integer sequence $(x_1,\dots,x_n)$ is an
\emph{ascent sequence} if
$$x_1=0\quad\text{and}\quad 0\leq x_i \leq 1 + \asc(x_1,\dots,x_{i-1}),
$$ for $2\leq i\leq n$. Here, $\asc(x_1,\dots, x_k)$ denotes the
number of ascents in $(x_1,\dots, x_k)$, and an ascent is a $j\in
[k-1]$ such that $x_j < x_{j+1}$. Bousquet-M\'elou et al.~\cite{BCDK}
derived a closed expression for the generating function enumerating
ascent sequences with respect to length and number of ascents; hence
they gave a new proof of Zagier's result, or rather a refinement of
it.

Recall that Stoimenow's diagrams are matchings with no neighbor
nestings. The discovery that led to the present paper is that there
are exactly $n!$ matchings on $[2n]$ with no left-nestings
(Theorem~\ref{thm:no-left-nestings}). As an example, these are the $6$
such matchings on $\{1,\dots,6\}$:
$$
  \begin{tikzpicture}[line width=.5pt, scale=0.4, font=\small]
  
  \filldraw   (1,0) circle (1.9pt); \node at (1,-0.7) {1};
  \filldraw   (2,0) circle (1.9pt); \node at (2,-0.7) {2};
  \filldraw   (3,0) circle (1.9pt); \node at (3,-0.7) {3};
  \filldraw   (4,0) circle (1.9pt); \node at (4,-0.7) {4};
  \filldraw   (5,0) circle (1.9pt); \node at (5,-0.7) {5};
  \filldraw   (6,0) circle (1.9pt); \node at (6,-0.7) {6};
   \draw[very thin] (0.5,0) -- (6.500000,0);
  \draw (2,0) arc (0:180:0.5000);
  \draw (4,0) arc (0:180:0.5000);
  \draw (6,0) arc (0:180:0.5000);
  \end{tikzpicture}\quad  
  \begin{tikzpicture}[line width=.5pt, scale=0.4, font=\small]
  
  \filldraw   (1,0) circle (1.9pt); \node at (1,-0.7) {1};
  \filldraw   (2,0) circle (1.9pt); \node at (2,-0.7) {2};
  \filldraw   (3,0) circle (1.9pt); \node at (3,-0.7) {3};
  \filldraw   (4,0) circle (1.9pt); \node at (4,-0.7) {4};
  \filldraw   (5,0) circle (1.9pt); \node at (5,-0.7) {5};
  \filldraw   (6,0) circle (1.9pt); \node at (6,-0.7) {6};
   \draw[very thin] (0.5,0) -- (6.500000,0);
  \draw (2,0) arc (0:180:0.5000);
  \draw (5,0) arc (0:180:1.0000);
  \draw (6,0) arc (0:180:1.0000);
  \end{tikzpicture}\quad
  \begin{tikzpicture}[line width=.5pt, scale=0.4, font=\small]
  
  \filldraw   (1,0) circle (1.9pt); \node at (1,-0.7) {1};
  \filldraw   (2,0) circle (1.9pt); \node at (2,-0.7) {2};
  \filldraw   (3,0) circle (1.9pt); \node at (3,-0.7) {3};
  \filldraw   (4,0) circle (1.9pt); \node at (4,-0.7) {4};
  \filldraw   (5,0) circle (1.9pt); \node at (5,-0.7) {5};
  \filldraw   (6,0) circle (1.9pt); \node at (6,-0.7) {6};
   \draw[very thin] (0.5,0) -- (6.500000,0);
  \draw (3,0) arc (0:180:1.0000);
  \draw (4,0) arc (0:180:1.0000);
  \draw (6,0) arc (0:180:0.5000);
  \end{tikzpicture}
  $$
  $$
  \begin{tikzpicture}[line width=.5pt, scale=0.4, font=\small]
  
  \filldraw   (1,0) circle (1.9pt); \node at (1,-0.7) {1};
  \filldraw   (2,0) circle (1.9pt); \node at (2,-0.7) {2};
  \filldraw   (3,0) circle (1.9pt); \node at (3,-0.7) {3};
  \filldraw   (4,0) circle (1.9pt); \node at (4,-0.7) {4};
  \filldraw   (5,0) circle (1.9pt); \node at (5,-0.7) {5};
  \filldraw   (6,0) circle (1.9pt); \node at (6,-0.7) {6};
   \draw[very thin] (0.5,0) -- (6.500000,0);
  \draw (3,0) arc (0:180:1.0000);
  \draw (5,0) arc (0:180:1.5000);
  \draw (6,0) arc (0:180:1.0000);
  \end{tikzpicture}\quad
  \begin{tikzpicture}[line width=.5pt, scale=0.4, font=\small]
  
  \filldraw   (1,0) circle (1.9pt); \node at (1,-0.7) {1};
  \filldraw   (2,0) circle (1.9pt); \node at (2,-0.7) {2};
  \filldraw   (3,0) circle (1.9pt); \node at (3,-0.7) {3};
  \filldraw   (4,0) circle (1.9pt); \node at (4,-0.7) {4};
  \filldraw   (5,0) circle (1.9pt); \node at (5,-0.7) {5};
  \filldraw   (6,0) circle (1.9pt); \node at (6,-0.7) {6};
   \draw[very thin] (0.5,0) -- (6.500000,0);
  \draw (3,0) arc (0:180:1.0000);
  \draw (5,0) arc (0:180:0.5000);
  \draw (6,0) arc (0:180:2.0000);
  \end{tikzpicture}\quad  
  \begin{tikzpicture}[line width=.5pt, scale=0.4, font=\small]
  
  \filldraw   (1,0) circle (1.9pt); \node at (1,-0.7) {1};
  \filldraw   (2,0) circle (1.9pt); \node at (2,-0.7) {2};
  \filldraw   (3,0) circle (1.9pt); \node at (3,-0.7) {3};
  \filldraw   (4,0) circle (1.9pt); \node at (4,-0.7) {4};
  \filldraw   (5,0) circle (1.9pt); \node at (5,-0.7) {5};
  \filldraw   (6,0) circle (1.9pt); \node at (6,-0.7) {6};
   \draw[very thin] (0.5,0) -- (6.500000,0);
  \draw (4,0) arc (0:180:1.5000);
  \draw (5,0) arc (0:180:1.5000);
  \draw (6,0) arc (0:180:1.5000);
  \end{tikzpicture}
$$

Can we also ``lift'' ascent sequences and unlabeled \tpt-free posets
to the level of all permutations? That is, can we define ``certain
sequences'' and ``certain posets'', both of cardinality $n!$, that are
supersets of ascent sequences and unlabeled \tpt-free posets,
respectively? For ascent sequences this is easy, and inversion tables
is a natural choice. The poset case is more challenging. However, we
show (Definition~\ref{def:factorial-poset} and
Theorem~\ref{thm:factorial-poset}) that there are exactly $n!$
naturally labeled posets $P$ on $[n]$ such that $i<_P k$ whenever
$i<j<_Pk$ for some $j\in[n]$; we call them \emph{factorial
  posets}. Here is a list of the 6 factorial posets on $\{1,2,3\}$:
$$
\pA{0.6pt}{0.55}\quad\;
\pB{0.6pt}{0.55}\quad
\pC{0.6pt}{0.55}\quad
\pD{0.6pt}{0.55}\quad
\pE{0.6pt}{0.55}\quad
\pF{0.6pt}{0.55}\label{6-factorial-posets}
$$ 
It is not hard to see
(Proposition~\ref{prop:factorial-posets-are-2+2-free}) that factorial
posets are \tpt-free. Moreover, we give an additional restriction on
the labeling of factorial posets under which the labeling is unique
(Proposition~\ref{prop:2+2-free}), and thus the subset of factorial
posets meeting that restriction is trivially in bijection with
unlabeled \tpt-free posets.

The bijections we give to prove that inversion tables, factorial
posets and matchings with no left-nesting are equinumerous do however
not specialize to give back the results from \cite{BCDK}. This remains
an interesting challenge.  In Section \ref{sec:cross-vs-nest} we prove
that we could have studied matchings with restrictions on crossings
instead of on nestings and present bijections to verify this.

Let $p=\pattern$. As mentioned before, Bousquet-M\'elou et
al.~\cite{BCDK} gave a bijection between matchings with no neighbor
nestings and $p$-avoiding permutations. We conjecture
(Conjecture~\ref{conj:distribution-of-rne}) a generalization of that
result. Namely, we conjecture that the distribution of right-nestings
over matchings on $[2n]$ with no left-nestings coincides with the
distribution of $p$ over permutations on $[n]$.

In a recent paper, Dukes and Parviainen~\cite{DP} study upper
triangular matrices with non-negative integer entries such that each
row and column has at least one nonzero entry and the total sum of the
entries is $n$. They provide a recursive encoding of those matrices as
ascent sequences. We have found a direct bijection
(Theorem~\ref{thm:no-neighbor-nestings}) from the same matrices to
matchings with no neighbor nestings. In addition, we show
(Proposition~\ref{prop:01-matrices}) that the subset of the matrices
whose entries are 0 or 1 are in bijection with matchings with no
left-nestings and no right-crossings.

\section{Matchings with no left-nestings}

Let $\M_n$ be the set of matchings on $[2n]$, and let $M\in\M_n$. If
$i<j$ and $\al=(i,j)$ is an arc of $M$ we call $i$ the \emph{opener}
of $\al$, and we call $j$ the \emph{closer} of $\al$.  In what follows
it will be convenient to order the arcs with respect to closer. In
particular, ``the last arc'' refers to the arc with closer $2n$. In
the introduction we defined what left- and right-nestings are, and by
$\lne(M)$ and $\rne(M)$ we shall denote the number of left- and
right-nestings, respectively. Let
$$\N_n = \{\,M\in\M_n : \lne(M)=0\,\}
$$
and $\N=\cup_{n\geq 0}\N_n$.
Define $\I_n$ as the Cartesian product
$$\I_n=[0,0]\times [0,1]\times\dots\times [0,n-1],
$$ where $[i,j]=\{i,i+1,\dots, j\}$. In other words, $\I_n$ is the set
of inversion tables of length $n$. Also, let $\I=\cup_{n\geq 0}\I_n$.

\begin{theorem}\label{thm:no-left-nestings}
  Matchings of \/$[2n]$\! with no left-nestings are in bijection with
  inversion tables of length $n$, and thus $|\N_n|=n!$.
\end{theorem}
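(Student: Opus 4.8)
The plan is to construct the bijection incrementally, building a matching on $[2n]$ from a matching on $[2n-2]$ by inserting two new points $2n-1$ and $2n$, and tracking the choices available at each step with a coordinate in $[0,n-1]$. More precisely, I would set up a map $\Phi_n\colon \N_n \to \I_n$ (or its inverse $\Psi_n\colon \I_n \to \N_n$) recursively. Given $M\in\N_n$, consider the last arc, i.e.\ the arc $(a,2n)$ with closer $2n$. Removing the point $2n$ leaves $a$ as an unmatched point among $[2n-1]\setminus\{a\}$; relabel the remaining $2n-2$ points order-isomorphically to $[2n-2]$ to get a matching $M'$, and record some integer $t_n\in[0,n-1]$ that encodes how $2n$'s arc and the point $a$ sat relative to the rest of $M$. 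Then recurse on $M'$. The inversion table is $(\,0,\,t_2,\,t_3,\,\dots,\,t_n\,)$ read off in this order, with $t_i\in[0,i-1]$.

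The crux is choosing the statistic $t_n$ so that (a) it ranges over exactly $n$ values as $M$ varies over the fiber of a fixed $M'$, and (b) the no-left-nesting condition is exactly preserved. The natural candidate: when we add the new point $2n$ we must pick the opener $a$ of its arc; the available positions for $a$ among the $2n-2$ old points (plus $2n-1$ which will be the opener or closer of the \emph{new} second arc) must be shown to number exactly $n$. A left-nesting is created precisely when some old arc has an opener immediately to the left of $a$; so the ``legal'' insertion slots for $a$ are: immediately after a closer, or at the very beginning, or in positions dictated by where $2n-1$ goes. I expect the clean formulation is: in a left-nesting-free matching, the openers that can be ``innermost'' are controlled, and the number of choices telescopes to $n$. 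One should verify that $2n-1$ is forced (or nearly forced) by the structure — likely $2n-1$ must itself be an opener or must be the closer of an arc nested directly inside, and counting these options together with the placement of $a$ gives the factor $n$.

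The key steps, in order, would be: (i) define the forward map $\Phi_n$ by locating the last arc $(a,2n)$, deleting $2n$, and defining $t_n$ as (say) the number of arcs of $M$ whose closer is less than $a$, or some closely related rank — then argue $0\le t_n\le n-1$; (ii) define the candidate inverse $\Psi_n$: given $M'\in\N_{n-1}$ and $t\in[0,n-1]$, specify exactly where to insert the two new points $2n-1,2n$ and the arc between the new opener and $2n$, in terms of $t$; (iii) check that $\Psi_n(M',t)$ has no left-nesting — this is where the no-left-nesting hypothesis on $M'$ must be used and where the range of $t$ matters, since an out-of-range or wrongly-placed insertion would create a left-nesting; (iv) check $\Phi_n\circ\Psi_n = \mathrm{id}$ and $\Psi_n\circ\Phi_n=\mathrm{id}$; (v) conclude by induction on $n$, with base case $n=0$ or $n=1$ trivial, that $|\N_n|=|\I_n|=n!$.

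The main obstacle will be step (iii) together with pinning down the right definition in step (i)/(ii): making the insertion rule for $2n-1$ and the new arc precise enough that exactly $n$ distinct left-nesting-free extensions arise, no more and no fewer. In particular I anticipate a subtlety about whether $2n-1$ is the opener of the last arc, the closer of some arc, or an isolated-looking opener of an earlier-closing arc, and the bookkeeping that shows these cases partition into $n$ slots indexed by $t\in[0,n-1]$. Once the insertion is described as ``place $2n$ at the right end; place its opener in the $t$-th admissible gap; then $2n-1$ goes in the unique remaining forced position,'' verifying left-nesting-freeness and bijectivity should be a routine (if fiddly) case analysis, and the $n!$ count drops out of the product structure of $\I_n$.
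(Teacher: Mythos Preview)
Your approach is the same as the paper's---build the bijection by recursively inserting the last arc---and your candidate statistic $t_n$ (the number of closers to the left of the opener of the last arc) is exactly the one the paper uses. But you have not found the key observation, and your tentative description of the legal insertion slots is wrong.

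The point you are missing is this. When you insert the new arc with closer $2n$ and opener $a$ into $M'\in\N_{n-1}$, the only way a left-nesting can be created is if $a+1$ is an \emph{opener} of an old arc: then that old arc is nested inside the new one with adjacent openers. Hence the legal positions for $a$ are precisely those with $a+1$ a closer, i.e.\ $a$ sits immediately to the \emph{left} of a closer. There are $n-1$ closers in $M'$ and one new closer $2n$, giving exactly $n$ slots, indexed by $t\in[0,n-1]$. Your proposed description ``immediately after a closer, or at the very beginning'' is backwards: inserting at the very beginning makes $a=1$, and then $a+1$ is the old leftmost point, which is always an opener---so this \emph{always} creates a left-nesting when $n\ge 2$. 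Likewise ``after a closer'' should be ``before a closer''.

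All of your worry about the point labelled $2n-1$ is a red herring. You insert exactly two new points: the closer at the far right, and the opener in one of the $n$ legal gaps just described. Whatever lands at position $2n-1$ after relabelling is irrelevant to the argument. Once you make this correction, steps (iii)--(v) are immediate: removing the last arc cannot create a nesting, so $M'\in\N_{n-1}$; the $n$ insertions are visibly distinct and recover $t$; and the induction goes through.
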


\begin{proof}
  Using recursion we define a bijection $f:\I\to\N$. Let $f(\ew) =
  \emptyset$, that is, let the empty inversion table map to the empty
  matching. Let $w=(a_1,\dots, a_n)$ be any inversion table in $\I_n$
  with $n>0$. Let $w'=(a_1,\dots,a_{n-1})$ and let $M'=f(w')$. Now
  create a matching $M$ in $\N_n$ by inserting a new last arc in $M'$
  whose opener is immediately to the left of the $(a_n+1)$st closer
  of $M'$ if $a_n<n-1$ and immediately to the left of its own closer
  if $a_n=n-1$.  Set $f(w)=M$. Note that the opener of the last arc
  has to be immediately to the left of some closer, otherwise a
  left-nesting would be created. Also note that removing the last arc
  from a matching in $\N_n$ cannot create a left-nesting. From a
  simple induction argument it thus follows that the described map is
  a bijection.

  It is also easy to give a direct, non-recursive, description of the
  inverse of $f$. Indeed, $f^{-1}(M)=(a_1,\dots,a_n)$ where $a_i$ is
  the number of closers to the left of the opener of the $i$th arc;
  here, as before, arcs are ordered by closer.
\end{proof}

As an example, let $w=(a_1,a_2,a_3,a_4)=(0,1,0,1)$. To construct the
matching corresponding to that inversion table we insert the arcs one
at the time, so that---as in the proof---the opener of the new last
arc is immediately to the left of the $(a_i+1)$st closer:
\begin{center}
  \begin{tikzpicture}[line width=.56pt, scale=0.45, font=\small]
  
  \filldraw   (1,0) circle (1.9pt); \node at (1,-0.7) {1};
  \filldraw   (2,0) circle (1.9pt); \node at (2,-0.7) {2};
   \draw[very thin] (0.5,0) -- (2.500000,0);
  \draw (2,0) arc (0:180:0.5000);
   \node at (2,-1.5) {1};
   \node at (1,-1.5) {$\star$};
  \end{tikzpicture}\quad
  \begin{tikzpicture}[line width=.56pt, scale=0.45, font=\small]
  
  \filldraw   (1,0) circle (1.9pt); \node at (1,-0.7) {1};
  \filldraw   (2,0) circle (1.9pt); \node at (2,-0.7) {2};
  \filldraw   (3,0) circle (1.9pt); \node at (3,-0.7) {3};
  \filldraw   (4,0) circle (1.9pt); \node at (4,-0.7) {4};
   \draw[very thin] (0.5,0) -- (4.500000,0);
  \draw (2,0) arc (0:180:0.5000);
  \draw (4,0) arc (0:180:0.5000);
   \node at (2,-1.5) {1};
   \node at (4,-1.5) {2};
   \node at (3,-1.5) {$\star$};
  \end{tikzpicture}\quad
  \begin{tikzpicture}[line width=.56pt, scale=0.45, font=\small]
  
  \filldraw   (1,0) circle (1.9pt); \node at (1,-0.7) {1};
  \filldraw   (2,0) circle (1.9pt); \node at (2,-0.7) {2};
  \filldraw   (3,0) circle (1.9pt); \node at (3,-0.7) {3};
  \filldraw   (4,0) circle (1.9pt); \node at (4,-0.7) {4};
  \filldraw   (5,0) circle (1.9pt); \node at (5,-0.7) {5};
  \filldraw   (6,0) circle (1.9pt); \node at (6,-0.7) {6};
   \draw[very thin] (0.5,0) -- (6.500000,0);
  \draw (3,0) arc (0:180:1.0000);
  \draw (5,0) arc (0:180:0.5000);
  \draw (6,0) arc (0:180:2.0000);
   \node at (3,-1.5) {1};
   \node at (5,-1.5) {2};
   \node at (6,-1.5) {3};
   \node at (2,-1.5) {$\star$};
  \end{tikzpicture}\quad
  \begin{tikzpicture}[line width=.56pt, scale=0.45, font=\small]
  
  \filldraw   (1,0) circle (1.9pt); \node at (1,-0.7) {1};
  \filldraw   (2,0) circle (1.9pt); \node at (2,-0.7) {2};
  \filldraw   (3,0) circle (1.9pt); \node at (3,-0.7) {3};
  \filldraw   (4,0) circle (1.9pt); \node at (4,-0.7) {4};
  \filldraw   (5,0) circle (1.9pt); \node at (5,-0.7) {5};
  \filldraw   (6,0) circle (1.9pt); \node at (6,-0.7) {6};
  \filldraw   (7,0) circle (1.9pt); \node at (7,-0.7) {7};
  \filldraw   (8,0) circle (1.9pt); \node at (8,-0.7) {8};
   \draw[very thin] (0.5,0) -- (8.500000,0);
  \draw (3,0) arc (0:180:1.0000);
  \draw (6,0) arc (0:180:1.0000);
  \draw (7,0) arc (0:180:2.5000);
  \draw (8,0) arc (0:180:1.5000);
   \node at (3,-1.5) {1};
   \node at (6,-1.5) {2};
   \node at (7,-1.5) {3};
   \node at (8,-1.5) {4};
   \node at (5,-1.5) {$\star$};
  \end{tikzpicture}
\end{center}
Here the star marks the opener of the new arc. Reading the number to
the right of the star we get $(1,2,1,2)$ and subtracting one from each
coordinate we recover the inversion table $(0,1,0,1)$.

\section{Factorial posets}

A poset $P$ of cardinality $n$ is said to be labeled if its elements
are identified with the integers $1,\dots,n$. A poset $P$ is
\emph{naturally labeled} if $i<j$ in $P$ implies $i<j$ in the usual
order.

\begin{definition}\label{def:factorial-poset}
  We call a naturally labeled poset $P$ on $[n]$ such that, for $i,j,k\in [n]$,
  $$ i<j<_Pk\implies i<_P k
  $$ a \emph{factorial poset}, and by $\F_n$ we denote the set of
  factorial posets on $[n]$. Similarly, we call a naturally labeled
  poset $P$ on $[n]$ such that, for $i,j,k\in [n]$,
  $$ i>j>_P k\implies i>_P k
  $$ a \emph{dually factorial poset}.
\end{definition}

There are 6 factorial posets on $\{1,2,3\}$, and we listed them on
page~\ref{6-factorial-posets}.  It is easy to check that of those
posets, exactly one is not dually factorial, namely
$$  
\begin{tikzpicture}[line width=0.6pt, scale=0.6]
  \style
  \node [disc] (r1)  at (    0,-1) {};
  \node [disc] (r11) at (    0,-2) {};
  \node [disc] (r2)  at ( 0.75,-2) {};
  \draw (r1) node[left=1pt] {2} -- (r11) node[left=1pt] {1};
  \draw (r2) node[right=1pt] {3}; 
\end{tikzpicture}\vspace{-3pt}
$$
Denoting this poset by $P$ we have $3>2>_P1$, but $3\not >_P1$.
\begin{definition}
  The \emph{predecessor set} of $j\in P$ is $\Pre(j) = \{ i : i <_P j
  \}$, and we denote by $\pre(j)=\#\Pre(j)$ the number of predecessors
  of $j$. Similarly we define $\Suc(j) = \{ i : i >_P j \}$ as the
  \emph{successor set} of $j$ and $\suc(j)=\#\Suc(j)$ as the number of
  successors of $j$.
\end{definition}
Note that $P$ is factorial if, and only if, for all $k$ in $P$, there
is a $j$ in $[0,n-1]$, such that $\Pre(k)=[1,j]$.  It is well
known---see for example Bogart~\cite{bogart}---that a poset is
{\tpt}-free if, and only if, the collection $\{ \,\Pre(k) : k\in
P\,\}$ of predecessor sets can be linearly ordered by inclusion; hence
the following proposition.

\begin{proposition}\label{prop:factorial-posets-are-2+2-free}
  Factorial posets are {\tpt}-free.
\end{proposition}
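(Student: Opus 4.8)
The plan is to use the characterization of \tpt-free posets in terms of predecessor sets that was recalled just before the statement: a poset is \tpt-free if and only if the collection $\{\Pre(k) : k\in P\}$ of its predecessor sets is totally ordered by inclusion. So it suffices to show that for a factorial poset $P$ on $[n]$, any two predecessor sets are comparable under inclusion.

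First I would invoke the observation made in the paragraph preceding the proposition: $P$ is factorial if and only if for every $k\in P$ there is some $j=j(k)\in[0,n-1]$ with $\Pre(k)=[1,j]$. This is essentially a restatement of the defining implication $i<j<_P k\implies i<_P k$, read as: whenever an integer $j$ lies below $k$ in $P$, every smaller integer $i$ also lies below $k$; hence $\Pre(k)$, being a down-set of $([n],<)$ contained in $[1,k-1]$, must be an initial segment $[1,j]$ (with the convention $[1,0]=\emptyset$). I would spell this equivalence out in a sentence or two, since it is the crux.

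Given that, the conclusion is immediate: initial segments of $\NN$ are linearly ordered by inclusion, because $[1,j]\subseteq[1,j']$ exactly when $j\le j'$. Therefore $\{\Pre(k):k\in P\}\subseteq\{[1,j]:0\le j\le n-1\}$ is a chain under inclusion, and by Bogart's criterion $P$ is \tpt-free.

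I do not expect any real obstacle here; the proposition is essentially a one-line consequence of a known theorem once the predecessor sets of a factorial poset are identified as initial segments. The only point requiring a modicum of care is the verification that the defining condition of a factorial poset is genuinely equivalent to ``every $\Pre(k)$ is an initial segment $[1,j]$'' — in particular checking the edge cases $\Pre(k)=\emptyset$ and noting that $\Pre(k)\subseteq[1,k-1]$ by natural labeling — but this is routine and the excerpt already asserts it.
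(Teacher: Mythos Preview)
Your proposal is correct and follows exactly the argument the paper gives: the paper notes just before the proposition that factorial posets have $\Pre(k)=[1,j]$ for some $j$, cites Bogart's characterization of \tpt-free posets via linearly ordered predecessor sets, and states the proposition as an immediate consequence. You have simply spelled out the one routine verification (that initial segments are totally ordered by inclusion) that the paper leaves implicit.
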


\begin{theorem}\label{thm:factorial-poset}
  Factorial posets on $[n]$ are in bijection with inversion tables of
  length $n$, and thus $|\F_n|=n!$.
\end{theorem}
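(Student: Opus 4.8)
The plan is to build a bijection $g\colon \I \to \F$ directly, again by a recursive ``add the largest element'' procedure, exactly paralleling the proof of Theorem~\ref{thm:no-left-nestings}. The key observation is the characterization noted just above the statement: a naturally labeled poset $P$ on $[n]$ is factorial if and only if for every $k\in[n]$ there is some $j\in[0,n-1]$ with $\Pre(k)=[1,j]$. In particular, when we pass from a factorial poset on $[n]$ to the induced subposet on $[n-1]$ by deleting the element $n$, the result is again a factorial poset on $[n-1]$, since deleting the top label cannot break the condition $\Pre(k)=[1,j]$ for any $k<n$ (the element $n$ is never a predecessor of anything, being maximal in the natural labeling). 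Conversely, given a factorial poset $Q$ on $[n-1]$, the factorial posets $P$ on $[n]$ whose restriction to $[n-1]$ equals $Q$ are exactly those obtained by choosing $\Pre(n)$ to be an order ideal of $Q$ of the form $[1,j]$; the subtlety is that not every such $j\in[0,n-1]$ yields a valid choice, so the count of extensions is not automatically $n$, and this is where the main work lies.

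To make the bijection work I need the set of admissible values of $j$ (those for which setting $\Pre(n)=[1,j]$ and taking the transitive closure again gives a factorial poset with that same $\Pre(n)$) to have exactly $n$ elements, and to index them $0,1,\dots,n-1$ in a canonical way. First I would show that $[1,j]$ is an order ideal of $Q$ precisely when $j$ lies in a certain set; concretely, if we list the elements of $[n-1]$ and ask which initial segments $[1,j]$ are down-closed in $Q$, then—because $Q$ is itself factorial—one checks that $[1,j]$ is an ideal iff no element of $[1,j]$ has a predecessor outside $[1,j]$, and using $\Pre(k)=[1,\pre(k)]$ this becomes the condition $\pre(k)\le j$ for all $k\le j$, i.e. $j\ge \max_{k\le j}\pre(k)$. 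One then verifies that adding $n$ with $\Pre(n)=[1,j]$ automatically preserves factoriality (we only need $\Pre(n)$ to be an initial segment, which it is by construction, and the predecessor sets of the old elements are unchanged). The remaining point is a counting lemma: the number of $j\in[0,n-1]$ satisfying $j\ge\max_{k\le j}\pre(k)$ is exactly $n$. This should follow from an inductive argument tracking how the set of admissible $j$ changes as one builds $Q$ up one element at a time, or more slickly by exhibiting a direct correspondence between admissible extensions and the value $a_n\in[0,n-1]$.

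Assuming this, I set $g(\ew)$ to be the empty poset, and for $w=(a_1,\dots,a_n)\in\I_n$ with $w'=(a_1,\dots,a_{n-1})$ and $Q=g(w')$, I let $g(w)$ be the factorial poset on $[n]$ extending $Q$ by attaching $n$ above the $a_n$th admissible ideal $[1,j]$ (with $a_n=0$ meaning $\Pre(n)=\emptyset$). The counting lemma guarantees this is well-defined and that the choices at step $n$ are in bijection with $[0,n-1]$; the restriction-to-$[n-1]$ map is the inverse at each step, so a straightforward induction gives that $g$ is a bijection $\I_n\to\F_n$. As with Theorem~\ref{thm:no-left-nestings}, one can then read off a non-recursive description of $g^{-1}$: given $P\in\F_n$, the $i$th coordinate $a_i$ records the rank of the ideal $\Pre(i)=[1,\pre(i)]$ among the admissible initial segments of the poset induced on $[i]$—or, more cleanly, $a_i$ can be described directly in terms of the predecessor counts $\pre(1),\dots,\pre(i)$. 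Combined with $|\I_n|=n!$, this yields $|\F_n|=n!$.

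The main obstacle I anticipate is the counting lemma—pinning down precisely which initial segments $[1,j]$ of $Q$ are admissible and proving there are exactly $n$ of them. The condition ``$j\ge\max_{k\le j}\pre(k)$'' is clean, but showing it has exactly $n$ solutions in $[0,n-1]$ requires genuinely using that $Q$ arose as a factorial poset (for a general monotone function $\pre$ the count could differ), so the cleanest route is probably to fold this into the induction: maintain as an invariant that the number of admissible $j$ at stage $m$ is exactly $m+1$, and check the invariant is preserved when element $m+1$ is added with a given $\Pre(m+1)=[1,j_0]$. Everything else is bookkeeping analogous to the matching case.
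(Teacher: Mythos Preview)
Your recursive plan would eventually work, but it is far more complicated than necessary, and the ``main obstacle'' you anticipate is in fact not an obstacle at all. You claim that ``not every such $j\in[0,n-1]$ yields a valid choice'' for $\Pre(n)$, and then set out to prove a counting lemma showing there are exactly $n$ admissible values. But every $j\in[0,n-1]$ is admissible: since $Q$ is naturally labeled, any predecessor of $k\le j$ is strictly less than $k$, hence at most $j$, so $[1,j]$ is automatically an order ideal of $Q$; and adding $n$ with $\Pre(n)=[1,j]$ then gives a poset that is transitive, naturally labeled, and factorial by construction. Your own condition $j\ge\max_{k\le j}\pre(k)$ already shows this, because $\pre(k)\le k-1<k\le j$ makes it vacuous. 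So the elaborate inductive invariant you propose is not needed.

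Once you notice this, your bijection collapses to something you can state non-recursively in one line, and this is exactly what the paper does: define $g(P)=(\pre(1),\dots,\pre(n))$. This lies in $\I_n$ because natural labeling forces $\pre(k)\le k-1$, and the inverse sends $(a_1,\dots,a_n)$ to the relation $i<_P k\iff 1\le i\le a_k$, which one checks directly is a factorial poset. In your framework, since all $n$ initial segments are admissible and naturally indexed $0,\dots,n-1$, the ``rank of $\Pre(i)$ among admissible initial segments'' is just $\pre(i)$ itself. So your approach is not so much a different route as the same route, obscured by an imagined difficulty.
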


\begin{proof}
  Define $g:\F_n\to\I_n$ by $g(P) = (a_1,\dots,a_n)$ where
  $a_k=\pre(k)$. To see that $g$ is a bijection we describe its
  inverse. Given an inversion table $w=(a_1,\dots,a_n)$ in $\I_n$ we
  construct a factorial poset $P=P(w)$ by postulating that $i<_P k$
  precisely when $1\leq i\leq a_k$. That this definition is consistent
  is easily seen by building $P$ recursively.
\end{proof}

We now have two bijections, $f$ from inversion tables to matchings
with no left-nestings, and $g$ from factorial posets to
inversion tables. Let $h=f\circ g$ be their composition:
$$
\begin{tikzpicture}[>=stealth']
  \matrix (m) [matrix of math nodes, row sep=1em, column sep=2.1em]
    { \F_n  &          &  \N_n \\
            & \I_n\!\! &       \\
    };
   \path[->]
   (m-1-1) edge node[below left=0pt]   {$g$} (m-2-2)
   (m-2-2) edge node[below right=-1pt] {$f$} (m-1-3)
   (m-1-1) edge node[above=0.1pt]      {$h$} (m-1-3)
   ;
\end{tikzpicture}
$$ Let $P\in\F_n$. From the proofs of
Theorems~\ref{thm:factorial-poset} and \ref{thm:no-left-nestings} it
is immediate that to build $M=h(P)$ we insert the arcs one at the time
so that, in the $i$th step, the opener of the new last arc is
immediately to the left of the $(\pre(i)+1)$st closer.

Next we describe the inverse map, $h^{-1}$. Take $M\in\N_n$ and let
$\al_1,\dots,\al_n$ be its arcs ordered by closer. Then $i<j$ in
$P=h^{-1}(M)$ if and only if the closer of $\al_i$ is to the left of
the opener of $\al_j$.

An \emph{interval order} is a poset with the property that each
element $x$ can be assigned an interval $I(x)$ of real numbers so that
$x<y$ in the poset if and only if every point in $I(x)$ is less than
every point in $I(y)$. Such an assignment is called an \emph{interval
  representation} of the poset. In 1970, Fishburn~\cite{fishburn}
showed that a poset is $\tpt$-free precisely when it has an interval
representation. Let us for a moment identify the arcs of a matching
with intervals of the real line. Then the function $h$, above, gives
an interval representation of each factorial poset.

\section{A unique labeling}

Let $M\in\N_n$ and let $\al_1,\dots,\al_n$ be its arcs ordered by
closer. Let $P=h^{-1}(M)$. Assume that $1\leq i<j\leq n$ in the usual
order. Note that if $\al_i$ and $\al_j$ form a nesting, then we cannot
have $\pre(i)=\pre(j)$ since then it would be a left-nesting which can
never occur by the definition of $g^{-1}$.  Thus $\al_i$ and $\al_j$
form a nesting precisely when $\pre(i)>\pre(j)$. If, in addition,
$j=i+1$ and $\suc(i)=\suc(j)$ then $\al_i$ and $\al_j$ form a
right-nesting. Thus $M$ is non-neighbor-nesting precisely when for
each $i\in [n-1]$ we have $\pre(i)\le\pre(i+1)$ or
$\suc(i)>\suc(i+1)$. By applying the bijection of Bousquet-M\'elou et
al.~\cite{BCDK} from non-neighbor-nesting matchings to unlabeled
$\tpt$-free posets we get the following result.

\begin{proposition}\label{prop:2+2-free}
  Factorial posets on $[n]$ such that for each $i\in [n-1]$ we have
  \begin{equation}\label{1}
    \pre(i)\le \pre(i+1)\;\text{ or }\;\suc(i)>\suc(i+1)
  \end{equation} 
  are in bijection with unlabeled \tpt-free posets on $n$ nodes; hence
  there are exactly $\fish_n$ such posets.
\end{proposition}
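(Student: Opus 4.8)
The plan is to piggyback on the characterization already worked out in the text and reduce the statement to a bijection that is essentially built for us. First I would observe that the discussion immediately preceding the proposition shows that the map $h^{-1}\colon\N_n\to\F_n$ restricts to a bijection between the matchings in $\N_n$ that have no neighbor nestings and the factorial posets satisfying condition~\eqref{1}. Indeed, $h^{-1}$ is a bijection $\N_n\to\F_n$ by Theorems~\ref{thm:no-left-nestings} and~\ref{thm:factorial-poset}, and the computation in the text establishes that for $M\in\N_n$ with $P=h^{-1}(M)$ one has: $M$ has no right-nesting if and only if, for each $i\in[n-1]$, $\pre(i)\le\pre(i+1)$ or $\suc(i)>\suc(i+1)$. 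So the subset of $\N_n$ consisting of non-neighbor-nesting matchings corresponds under $h^{-1}$ exactly to the subset of $\F_n$ defined by~\eqref{1}.

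Next I would invoke the result of Bousquet-M\'elou et al.~\cite{BCDK}: there is a bijection between non-neighbor-nesting matchings on $[2n]$ (Stoimenow's diagrams) and unlabeled $\tpt$-free posets on $n$ nodes, and the cardinality of either set is $\fish_n$. Composing that bijection with the restriction of $h^{-1}$ from the previous paragraph yields a bijection from the factorial posets on $[n]$ satisfying~\eqref{1} to unlabeled $\tpt$-free posets on $n$ nodes, and in particular the former set has cardinality $\fish_n$. That is exactly the assertion of Proposition~\ref{prop:2+2-free}.

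The only real content beyond bookkeeping is the equivalence ``$M$ non-neighbor-nesting $\iff$ $\eqref{1}$ holds for $P=h^{-1}(M)$,'' and I would make sure that argument is airtight. The key facts are: (i) under $h^{-1}$, arcs $\al_i,\al_j$ with $i<j$ form a nesting iff $\pre(i)>\pre(j)$ — here the point is that a nesting with $\pre(i)=\pre(j)$ would force $j=i+1$ and the two openers to be adjacent, i.e.\ a left-nesting, which is impossible in $\N_n$; and (ii) given a nesting formed by $\al_i$ and $\al_{i+1}$, it is a right-nesting precisely when the closers are adjacent, which (by the definition of the insertion procedure in $h$, where later arcs are inserted to the right) happens exactly when $\suc(i)=\suc(i+1)$. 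Putting (i) and (ii) together, a right-nesting exists somewhere in $M$ iff there is an $i\in[n-1]$ with $\pre(i)>\pre(i+1)$ and $\suc(i)=\suc(i+1)$, and negating this gives~\eqref{1}. Since left-nestings are already excluded in $\N_n$, ``no neighbor nesting'' is equivalent to ``no right-nesting,'' completing the equivalence.

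I do not anticipate a serious obstacle: the proposition is a corollary of the structure set up in the preceding sections together with the cited bijection of~\cite{BCDK}. The one place to be careful is the direction convention in the insertion algorithm defining $h$ — one must check that ``$\suc(i)=\suc(i+1)$'' really is the condition that the closers of $\al_i$ and $\al_{i+1}$ are consecutive in $M=h(P)$, rather than something slightly different; this follows by tracking how the recursive insertion in the proof of Theorem~\ref{thm:no-left-nestings} places the opener of each new last arc relative to existing closers, but it is worth stating explicitly so the reader can verify it.
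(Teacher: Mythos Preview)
Your proposal is correct and mirrors the paper's argument exactly: the paragraph preceding the proposition establishes that $M\in\N_n$ is non-neighbor-nesting iff $P=h^{-1}(M)$ satisfies~\eqref{1}, and one then composes $h^{-1}$ with the bijection of~\cite{BCDK}. One minor slip in your justification of~(i): a nesting with $\pre(i)=\pre(j)$ does not literally force $j=i+1$, but it does place both openers in the same opener-interval, where any two arcs must cross rather than nest (cf.\ Lemma~\ref{L:structure}\eqref{first}), so no such nesting exists.
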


An alternative way to see the above result is that given an unlabeled
\tpt-free poset $P$ there is exactly one way to label $P$ so that the
resulting poset is factorial and satisfies \eqref{1}. The key
observation to such a labeling is that if $P$ is factorial and
$\eqref{1}$ holds then the pairs $(\suc(1),\pre(1))$, \dots,
$(\suc(n),\pre(n))$ are ordered weakly decreasing with respect to the
first coordinate, and on equal first coordinate weakly increasing with
respect to the second coordinate. As an example we consider the
unlabeled \tpt-free poset
$$P\,=
\begin{tikzpicture}[line width=.5pt, scale=.45, baseline=(c.base)]
  \style
  \node [disc] (d) at ( 0.5, 0) {};
  \node [disc] (c) at ( 0.5,-1) {};
  \node [disc] (e) at ( 1.5,-2) {};
  \node [disc] (a) at (-0.5,-2) {};
  \node [disc] (b) at ( 0.5,-2) {};
  \node [disc] (f) at ( 2.5,-2) {};
  \draw (d) -- (c) (d) -- (e) (c) -- (a) (c) -- (b);
\end{tikzpicture}\smallskip
$$ We shall use the observation above to label $P$ so that it is
factorial and satisfies~$\eqref{1}$. We start by naming the vertices
$a$, $b$, $c$, $d$, $e$ and $f$. Then we calculate predecessor and
successor sets:
$$
\begin{tikzpicture}[line width=.5pt, scale=.55, baseline=(c.base)]
  \style
  \node [disc] (d) at ( 0.5, 0) {};
  \node [disc] (c) at ( 0.5,-1) {};
  \node [disc] (e) at ( 1.5,-2) {};
  \node [disc] (a) at (-0.5,-2) {};
  \node [disc] (b) at ( 0.5,-2) {};
  \node [disc] (f) at ( 2.5,-2) {};
  \draw 
  (d) node[above=1pt] {$d$} -- (c) node[above left=-0.2pt] {$c$} 
  (d)                       -- (e) node[below=1.6pt] {$e$} 
  (c)                       -- (a) node[below=1.6pt] {$a$}
  (c)                       -- (b) node[below=1pt] {$b$}
  (f) node[below=1pt] {$f$}; 
\end{tikzpicture}\;
\begin{array}{cccccccc}
 x                \!\!&=&\!\! a     & b     & c     & d     & e     & f    \\
 \Pre(x)          \!\!&=&\!\!\void  &\void  & ab    & abce  &\void  &\void \\
 \Suc(x)          \!\!&=&\!\! cd    & cd    & d     &\void  & d     &\void \\
 (\suc(x),\pre(x))\!\!&=&\!\! (2,0) & (2,0) & (1,2) & (0,4) & (1,0) & (0,0)\\
 \text{label}     \!\!&=&\!\! 1     & 2     & 4     & 6     & 3     & 5   
\end{array}
$$ Finally, we label the elements with the integers $1$, $2$, $3$, $4$
and $5$ so that when we read the $(\suc,\pre)$-pair for the vertex
labeled $1$, and then the $(\suc,\pre)$-pair for the vertex labeled
$2$, and so on, then those pairs are read in the prescribed order:
$$
\begin{tikzpicture}[line width=.5pt, scale=.45, baseline=(c.base)]
  \style
  \node [disc] (d) at ( 0.5, 0) {};
  \node [disc] (c) at ( 0.5,-1) {};
  \node [disc] (e) at ( 1.5,-2) {};
  \node [disc] (a) at (-0.5,-2) {};
  \node [disc] (b) at ( 0.5,-2) {};
  \node [disc] (f) at ( 2.5,-2) {};
  \draw 
  (d) node[above=1pt] {$6$} -- (c) node[above left=-0.2pt] {$4$} 
  (d)                       -- (e) node[below=1.6pt] {$3$} 
  (c)                       -- (a) node[below=1.6pt] {$1$}
  (c)                       -- (b) node[below=1pt] {$2$}
  (f) node[below=1pt] {$5$}; 
\end{tikzpicture}\smallskip
$$ 
In this example, we could have chosen to label $a$ by 2 and $b$ by
1; it would not have made a difference since the vertices named $a$ and
$b$ are indistinguishable.

\section{Crossings versus nestings}\label{sec:cross-vs-nest}

A \emph{crossing} of a matching $M$ is a pair of arcs $(i,k)$ and
$(j,\ell)$ with $i<j<k<\ell$, and we can define left- and
right-crossings analogously to how it was defined for nesting
arcs. With $A$ and $B$ as in the table below there are bijections
between 
$$
\{M\in\M_n : \text{$M$ is non $A$}\}\,\text{ and }\,
\{M\in\M_n : \text{$M$ is non $B$}\}.
$$
\begin{center}
  \begin{tabular}{ccc}
    A                && B \\
    nesting          && crossing \\
    neighbor nesting && neighbor crossing \\
    left-nesting     && left-crossing
  \end{tabular}
\end{center}

The first case is well known: for bijections between non-nesting
matchings and non-crossing matchings see for instance \cite{deM,deSC,
  KaZe}. We give bijections for the two remaining cases in this
section. There exists a more complicated bijection~\cite{sund} that
can explain all three levels at once; see comment at the end of this
section.

The second case is the most challenging, so let us look at the third
case first. The proof of Theorem~\ref{thm:no-left-nestings} gives a
bijection $f$ from inversion tables to non-left-nesting
matchings. That bijection can be modified to give a bijection
$f_{\nc}$ from inversion tables to non-left-crossing matchings
(Theorem~\ref{thm:no-left-crossings}), and so $f_{\nc}\circ f^{-1}$ is
a bijection from non-left-nesting to non-left-crossing matchings.

\begin{theorem}\label{thm:no-left-crossings}
  Matchings of \/$[2n]$\! with no left-crossing are in bijection with
  inversion tables of length $n$; hence there are exactly $n!$ such
  matchings.
\end{theorem}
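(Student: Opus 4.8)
The plan is to mimic the proof of Theorem~\ref{thm:no-left-nestings} as closely as possible, replacing the insertion rule so that the arc we add cannot create a \emph{left-crossing} rather than a left-nesting. Recall that in $f$ the new last arc's opener is placed immediately to the left of a closer (or of its own closer); that was exactly the condition preventing a left-nesting, because an opener sitting immediately to the right of another opener $p$ would, together with the arc starting at $p$, form a left-nesting. For left-crossings the obstruction is different: a left-crossing is a pair of arcs $(i,k),(j,\ell)$ with $i<j<k<\ell$ and $j=i+1$, so the new (last, hence rightmost-closing) arc $\al=(j,2n)$ creates a left-crossing with an arc $(i,k)$ precisely when its opener $j$ is immediately to the right of an opener $i$ whose closer $k$ satisfies $j<k<2n$ — that is, whenever $j-1$ is an opener of an arc that is still "open" at position $j$. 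So the safe positions for the new opener are: immediately to the left of position $2n$ (its own closer), or immediately to the right of any \emph{closer}, or at the very first position $1$. Equivalently, the forbidden spots are exactly immediately to the right of an opener.

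First I would set up the recursion: define $f_{\nc}:\I\to\N_{\nc}$ (writing $\N_{\nc}$ for non-left-crossing matchings) by $f_{\nc}(\ew)=\void$, and for $w=(a_1,\dots,a_n)$ with $w'=(a_1,\dots,a_{n-1})$, $M'=f_{\nc}(w')$, form $M$ by inserting a new last arc whose closer is $2n$ and whose opener occupies the $(a_n+1)$st "legal" slot among $M'$ counted from the left, where the legal slots are position $1$ and the positions immediately following each of the $2(n-1)$ points of $M'$, \emph{except} that the slot immediately following an opener is illegal; when $a_n=n-1$ the opener goes immediately left of its own closer. I would check that there are exactly $n$ legal slots available when inserting the $n$th arc (the $n-1$ slots just after the $n-1$ closers of $M'$, plus the "own-closer" slot), matching the range $[0,n-1]$ for $a_n$, and that placing the opener in slot $a_n+1$ genuinely avoids all left-crossings with older arcs. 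Conversely, removing the last arc from any $M\in\N_{\nc}$ cannot create a left-crossing, so by the same simple induction as before the map is a bijection; I would also record the non-recursive inverse, $f_{\nc}^{-1}(M)=(a_1,\dots,a_n)$ where $a_i$ counts the number of legal slots strictly left of the opener of the $i$th arc (arcs ordered by closer).

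The main obstacle, and the only place genuine care is needed, is the bookkeeping of "legal slots": after inserting the new arc, the old closers and openers stay closers and openers, but the newly inserted opener itself now makes the slot just to its right illegal, and I must make sure the count of $n$ available slots at stage $n$ is exactly right and stable under the recursion — in particular that a closer that was the last closer $2(n-1)$ of $M'$ still contributes a legal slot in $M$ (it does, since in $M$ it is an interior closer followed by at worst a closer or the new structure). I expect the cleanest way to handle this is to observe that "legal slots of $M$" biject naturally with "$\{1\}\cup\{$positions just after a closer$\}$", and that each arc contributes precisely one closer, giving exactly $k$ such positions in a matching with $k$ arcs minus one, plus the own-closer option — so stage $n$ offers $n$ choices, as needed. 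Once that combinatorial invariant is nailed down, consistency of the definition and the induction proving bijectivity are routine, exactly parallel to Theorem~\ref{thm:no-left-nestings}.
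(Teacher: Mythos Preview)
Your approach is exactly the paper's: they phrase the insertion rule as ``place the opener of the new last arc immediately to the right of the $a_n$th closer if $a_n>0$, or at the extreme left if $a_n=0$,'' which is precisely your ``$(a_n+1)$st legal slot.'' One small slip in your bookkeeping paragraph: the ``own-closer'' slot \emph{is} the slot just after the last (the $(n-1)$st) closer of $M'$, so your parenthetical enumeration double-counts that one and omits the extreme-left slot; the correct list of $n$ legal slots is the extreme-left position together with the $n-1$ positions just after each closer, and with that correction everything you wrote goes through.
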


\begin{proof}
  As in the proof of Theorem~\ref{thm:no-left-nestings} we
  define a bijection $f_{\nc}$ recursively. The difference is that
  this time the opener of the new last arc is immediately to the right
  of the $a_n$th closer if $a_n>0$, or to the extreme left if
  $a_n=0$.
\end{proof}

For the second case, we shall give a bijection via matrices of a
certain kind. Let $\T_n$ be the set of upper triangular matrices with
non-negative integer entries, such that no row or column has only
zeros and the total sum of the entries is $n$. These matrices have
recently been studied by Dukes and Parviainen~\cite[\S2]{DP}; they
gave a recursive encoding of the matrices in $\T_n$ as ascent
sequences, and thus they showed that $|\T_n|=\fish_n$. This fact seems
to have been first observed by Vladeta Jovovic~\cite{sloane}. We
shall give a surjection $\psi$ from the set of matchings of $[2n]$ to
$\T_n$. Further, we shall show that if $\psi$ is restricted to
non-neighbor-nesting matchings, or non-neighbor-crossing matchings,
then $\psi$ is a bijection.

Before we describe $\psi$ we need a few definitions. Let $M$ be a
matching and let $O(M)$ and $C(M)$ be the set of openers and closers
of $M$, respectively. Write 
$$O(M) = O_1\cup\dots\cup O_k \;\text{ and }\; C(M) = C_1\cup\dots\cup C_{\ell}
$$ as disjoint unions of maximal intervals. Clearly, $k=\ell$; we denote this
number $\inter(M)$. As an example, for $M=\{(1,2),(3,5),(4,6)\}$ we
have $O(M) = [1,1] \cup [3,4]$, $C(M) = [2,2] \cup [5,6]$ and
$\inter(M)=2$.

We are now in a position to define the promised map from matchings to
matrices. Assume that $M$ is a matching and that its intervals of
openers and closers are ordered in the natural order.  Let $\psi(M) =
T$ where $T=(t_{ij})$ is an $\inter(M)\times\inter(M)$ matrix and
$$t_{ij} = |M \,\cap\, O_i\!\times\! C_j|.
$$ 
In other words, $t_{ij}$ is the number of arcs whose opener is in
$O_i$ and closer in $C_j$. For instance, the preimage of 
$\left(
\begin{smallmatrix}
  1 & 1 \\
  0 & 1
\end{smallmatrix}
\right)
$
under $\psi$ consists of the following 4 matchings:
$$
\begin{tikzpicture}[line width=.45pt, scale=0.33]
  \nodestyle
  \tikzstyle{opener} = []  
  \filldraw   (1,0) circle (1.9pt); \node[opener] at (1,-0.7) {1};
  \filldraw   (2,0) circle (1.9pt); \node[opener] at (2,-0.7) {2};
  \filldraw   (3,0) circle (1.9pt); \node at (3,-0.7) {3};
  \filldraw   (4,0) circle (1.9pt); \node[opener] at (4,-0.7) {4};
  \filldraw   (5,0) circle (1.9pt); \node at (5,-0.7) {5};
  \filldraw   (6,0) circle (1.9pt); \node at (6,-0.7) {6};
   \draw[very thin] (0.5,0) -- (6.500000,0);
  \draw (3,0) arc (0:180:1.0000);
  \draw (5,0) arc (0:180:1.5000);
  \draw (6,0) arc (0:180:1.0000);
\end{tikzpicture}\quad
\begin{tikzpicture}[line width=.45pt, scale=0.33]
  \nodestyle
  \tikzstyle{opener} = []
  \filldraw   (1,0) circle (1.9pt); \node[opener] at (1,-0.7) {1};
  \filldraw   (2,0) circle (1.9pt); \node[opener] at (2,-0.7) {2};
  \filldraw   (3,0) circle (1.9pt); \node at (3,-0.7) {3};
  \filldraw   (4,0) circle (1.9pt); \node[opener] at (4,-0.7) {4};
  \filldraw   (5,0) circle (1.9pt); \node at (5,-0.7) {5};
  \filldraw   (6,0) circle (1.9pt); \node at (6,-0.7) {6};
   \draw[very thin] (0.5,0) -- (6.500000,0);
  \draw (3,0) arc (0:180:1.0000);
  \draw (5,0) arc (0:180:0.5000);
  \draw (6,0) arc (0:180:2.0000);
\end{tikzpicture}\quad
\begin{tikzpicture}[line width=.45pt, scale=0.33]
  \nodestyle
  \tikzstyle{opener} = []
  \filldraw   (1,0) circle (1.9pt); \node[opener] at (1,-0.7) {1};
  \filldraw   (2,0) circle (1.9pt); \node[opener] at (2,-0.7) {2};
  \filldraw   (3,0) circle (1.9pt); \node at (3,-0.7) {3};
  \filldraw   (4,0) circle (1.9pt); \node[opener] at (4,-0.7) {4};
  \filldraw   (5,0) circle (1.9pt); \node at (5,-0.7) {5};
  \filldraw   (6,0) circle (1.9pt); \node at (6,-0.7) {6};
   \draw[very thin] (0.5,0) -- (6.500000,0);
  \draw (3,0) arc (0:180:0.5000);
  \draw (5,0) arc (0:180:2.0000);
  \draw (6,0) arc (0:180:1.0000);
\end{tikzpicture}\quad
\begin{tikzpicture}[line width=.45pt, scale=0.33]
  \nodestyle
  \tikzstyle{opener} = []  
  \filldraw   (1,0) circle (1.9pt); \node[opener] at (1,-0.7) {1};
  \filldraw   (2,0) circle (1.9pt); \node[opener] at (2,-0.7) {2};
  \filldraw   (3,0) circle (1.9pt); \node at (3,-0.7) {3};
  \filldraw   (4,0) circle (1.9pt); \node[opener] at (4,-0.7) {4};
  \filldraw   (5,0) circle (1.9pt); \node at (5,-0.7) {5};
  \filldraw   (6,0) circle (1.9pt); \node at (6,-0.7) {6};
   \draw[very thin] (0.5,0) -- (6.500000,0);
  \draw (3,0) arc (0:180:0.5000);
  \draw (5,0) arc (0:180:0.5000);
  \draw (6,0) arc (0:180:2.5000);
\end{tikzpicture}
$$ Note that of these matchings exactly one has no neighbor nestings
and exactly one has no neighbor crossings. We shall see that this is no
coincidence.

\begin{theorem}\label{thm:no-neighbor-nestings}
  When restricted to matchings of \/$[2n]$\! with no neighbor nestings 
  the function $\psi$, defined above, is a bijection onto $\T_n$.
\end{theorem}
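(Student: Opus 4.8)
The plan is to turn $\psi$ into an explicitly invertible map on non-neighbor-nesting matchings by first pinning down the ``block structure'' of an arbitrary matching and then showing that, once the matrix $T$ is known, the no-neighbor-nesting condition leaves no freedom at all.

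\textbf{Step 1: the image lies in $\T_n$.} Since $1$ is always an opener, $2n$ always a closer, and two maximal opener-intervals (or two maximal closer-intervals) can never be adjacent, the points $1,\dots,2n$ split as $O_1\,C_1\,O_2\,C_2\cdots O_k\,C_k$ with $k=\inter(M)$. Hence every block $C_j$ with $j<i$ lies entirely to the left of $O_i$, so an arc with opener in $O_i$ must close in some $C_j$ with $j\ge i$; thus $\psi(M)$ is upper triangular. Its $i$th row sum is $|O_i|\ge 1$, its $j$th column sum is $|C_j|\ge 1$, and its total is the number of arcs, $n$. So $\psi(M)\in\T_n$, and moreover the row and column sums of $T$ record exactly the sizes $|O_i|$ and $|C_j|$.

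\textbf{Step 2: a local reading of ``no neighbor nesting''.} In a left-nesting $(i,\ell),(j,k)$ with $j=i+1$ the points $i$ and $i+1$ are both openers, so they belong to a common block $O_a$; dually, in a right-nesting the relevant pair of consecutive closers belongs to a common $C_b$. Conversely, if two consecutive openers inside some $O_a$ have their closers in the ``wrong'' order a left-nesting appears, and symmetrically inside a $C_b$. Therefore $M$ has no neighbor nesting if and only if $(\mathrm{a})$ reading the openers of each block $O_a$ from left to right, their closers increase, and $(\mathrm{b})$ reading the closers of each block $C_b$ from left to right, their openers increase.

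\textbf{Step 3: reconstruction and bijectivity.} Given $T=(t_{ij})\in\T_n$, the row sums $r_i$ and column sums $c_j$ are all positive, so the blocks are forced: $O_1=[1,r_1]$, $C_1=[r_1{+}1,r_1{+}c_1]$, and so on, with $C_k$ ending at $2n$, whence $\inter(M)=k$. Inside $O_a$, condition $(\mathrm{a})$ forces the first $t_{a,a}$ openers to close in $C_a$, the next $t_{a,a+1}$ in $C_{a+1}$, and so on, so the set $O_a^{(b)}$ of openers of $O_a$ closing in $C_b$ is a determined contiguous block of $t_{ab}$ openers; symmetrically condition $(\mathrm{b})$ makes the set $C_b^{(a)}$ of closers of $C_b$ opened from $O_a$ a determined contiguous block of $t_{ab}$ closers. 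Finally, among the $t_{ab}$ arcs joining $O_a$ to $C_b$, either of $(\mathrm{a})$ or $(\mathrm{b})$ says that the $m$th smallest opener of $O_a^{(b)}$ must be joined to the $m$th smallest closer of $C_b^{(a)}$. This determines $M$ uniquely, so $\psi$ is injective on non-neighbor-nesting matchings; and running the construction forwards from an arbitrary $T\in\T_n$ produces a matching whose cell counts and block sizes agree with $T$, so $\psi(M)=T$, and which satisfies $(\mathrm{a})$ and $(\mathrm{b})$ by construction, hence has no neighbor nesting. This gives surjectivity onto $\T_n$, completing the proof.

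\textbf{Main obstacle.} Steps 1 and 2 are bookkeeping. The point needing care is the compatibility in Step 3: that the contiguous block $O_a^{(b)}$ and the contiguous block $C_b^{(a)}$ both have size $t_{ab}$ and can be joined order-preservingly without conflicting with the choices made in the other cells --- that is, that $(\mathrm{a})$ and $(\mathrm{b})$ are jointly satisfiable in exactly one way. One should also double-check that the reconstructed $M$ has no neighbor nesting \emph{across} block boundaries, not merely within a single cell $O_a\times C_b$; this holds because all closers of $C_b$ precede all closers of $C_{b+1}$, so the ``closers increase along $O_a$'' property survives the passage from one target block to the next, and symmetrically for openers.
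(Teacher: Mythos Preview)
Your proposal is correct and follows essentially the same route as the paper. Your conditions~(a) and~(b) in Step~2 are exactly the content of the paper's Lemma~\ref{L:structure}\,(1)--(2) (``closers increase along $O_a$'' is equivalent to ``arcs with openers in $O_a$ pairwise cross''), and your Step~3 reconstruction matches the paper's determination of the sets $X_{ij}$, $Y_{ij}$ and the unique crossing matching between them; you are simply more explicit than the paper about Step~1 and about checking that the reconstructed matching really has $\psi(M)=T$ and no neighbor nestings.
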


Before we give the proof we need a lemma.

\begin{lemma}\label{L:structure} 
  Let $M$ be a matching. Assume that $O'$ is an interval of openers
  in $M$, and that $C'$ is an interval of closers in $M$. We have: 
  \begin{enumerate}
  \item\label{first} if \makebox[11.8ex][r]{$\lne(M)=0$} then any pair
    of arcs with openers in $O'$ are crossing;
  \item\label{second} if \makebox[11.8ex][r]{$\rne(M)=0$} then any pair
    of arcs with closers in $C'$ are crossing;
  \item\label{third} if \makebox[11.8ex][r]{$\lcr(M)=0$} then any pair
    of arcs with openers in $O'$ are nesting;
  \item\label{fourth} if \makebox[11.8ex][r]{$\rcr(M)=0$} then any pair
    of arcs with closers in $C'$ are nesting.
  \end{enumerate}
\end{lemma}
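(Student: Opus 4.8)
The plan is to prove all four parts by the same local argument, noting that parts (3) and (4) are obtained from (1) and (2) by interchanging the roles of nestings and crossings (equivalently, by the vertical reflection of a matching, which swaps openers with closers and nestings with crossings), so it suffices to treat (1) carefully and then indicate the symmetric modifications.

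For part (1), suppose $\lne(M)=0$ and let $\alpha=(p,q)$ and $\beta=(r,s)$ be two arcs whose openers $p$ and $r$ both lie in the interval $O'$ of openers; without loss of generality $p<r$. Since $p,r\in O'$ and $O'$ is an interval consisting entirely of openers, every integer between $p$ and $r$ is an opener, hence none of them is a closer; in particular $q,s>r$. I would first reduce to the case where $p$ and $r$ are adjacent openers in $O'$: the key point is that it is enough to show that consecutive arcs (consecutive with respect to the order of their openers within $O'$) cross, because "crossing" of the two extreme arcs then follows — actually the cleanest route is to argue directly for arbitrary $p<r$ in $O'$. The two arcs either cross ($p<r<q<s$) or nest ($p<r<s<q$); I must rule out nesting. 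If they nested, then $p<r<s<q$, i.e.\ $(p,q)$ and $(r,s)$ with $p<r<s<q$ form a nesting of $M$. Now I want to upgrade this to a left-nesting to get a contradiction with $\lne(M)=0$. The obstacle is that $p$ and $r$ need not be adjacent, so $(p,q),(r,s)$ need not themselves be a \emph{left}-nesting. This is exactly where the interval hypothesis is used: I would take the opener $r'$ immediately to the left of $r$; since $r\in O'$ and $O'$ is an interval of openers with $p<r$, we have $r'\in O'$ and $r'$ is again an opener, say $r'$ opens an arc $(r',s')$ with $s'>r'$. The pair $(r',s')$ and $(r,s)$ satisfies $r'=r-1$, so if they nest it is a left-nesting. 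Since $\lne(M)=0$ they cannot nest, so they cross: $r'<r<s'<s$. Iterating this — replacing $r$ by $r'$ and walking leftward through the consecutive openers of $O'$ down to $p$ — I obtain at each step that consecutive arcs cross, and in particular I can conclude by a downward induction on $r-p$.

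So the clean structure is an induction. Base case $r=p+1$: then $(p,q)$ and $(r,s)$ with $p<r$, if they nested it would be a left-nesting (since $r=p+1$), contradicting $\lne(M)=0$; hence they cross. Inductive step: for $r-p\ge 2$, let $m$ be the opener with $p<m<r$ and $m$ adjacent to $r$ among openers of $O'$ (so $m\ge p$ and $m = r-1$, using that $O'$ is an interval of openers). By the base-case reasoning applied to $m$ and $r$, the arcs through $m$ and $r$ cross, so the arc through $r=(r,s)$ has its closer $s$ lying strictly between $m$ and the closer of the arc through $m$. By the inductive hypothesis the arcs through $p$ and $m$ cross, so the closer of the arc through $m$ lies strictly between $p$'s closer position and $q$; combining, $s$ lies between $r$ and $q$, i.e.\ $p<r<s<q$ fails — rather, one reads off $p<r<q<s$ — giving the desired crossing of $(p,q)$ and $(r,s)$. (One must be slightly careful ordering the three closers; the relation to establish is $q<s$, and it follows by transitivity from "$q<(\text{closer of }m\text{'s arc})$" which is the crossing of $p$'s and $m$'s arcs, together with "$(\text{closer of }m\text{'s arc})<s$" which is the crossing of $m$'s and $r$'s arcs.)

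The main obstacle, as indicated, is purely bookkeeping: tracking the relative order of the three (or more) closers as one walks through the consecutive openers, and making sure the induction is set up so that at each step the two arcs being compared are \emph{adjacent} openers, which is the only case in which "not left-nesting" immediately forces "crossing". Once that inductive scaffolding is in place the argument is short. For part (2), apply the mirror-image argument: if $\rne(M)=0$ and two arcs have closers $q<s$ in the interval $C'$ of closers, walk through the consecutive closers from $q$ up to $s$; adjacent closers cannot form a right-nesting, hence their arcs cross, and transitivity of the opener-order gives that the two original arcs cross. Parts (3) and (4) follow by swapping the words "nesting" and "crossing" throughout parts (1) and (2) respectively — or formally by applying parts (1), (2) to the matching $\overline M$ obtained by reversing the ground set $[2n]\mapsto[2n]$, $i\mapsto 2n+1-i$, under which openers and closers are exchanged, left-crossings become right-nestings, and crossings become nestings — so no separate work is needed. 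I would state the reflection once and invoke it, keeping the written proof to the single induction plus two one-line reductions.
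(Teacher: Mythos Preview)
Your induction for part~(1) is correct and is exactly the paper's argument, just written out in more detail: adjacent openers in $O'$ cannot give a nesting (it would be a left-nesting), hence they cross; then transitivity of the closer-order (if $(p,q),(m,t)$ cross and $(m,t),(r,s)$ cross then $q<t<s$) pushes this through to any pair. The paper compresses this to one sentence (``by an easy induction argument all the arcs must cross'') and then says the other three parts are ``proved by similar arguments,'' which is your word-swap route.

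One slip to flag: your ``formal'' reduction of (3) and (4) to (1) and (2) via the reflection $i\mapsto 2n+1-i$ does not work. That reflection exchanges openers with closers and left with right, but it \emph{preserves} the nesting/crossing dichotomy: a nesting $i<j<k<\ell$ with arcs $(i,\ell),(j,k)$ maps to $2n{+}1{-}\ell<2n{+}1{-}k<2n{+}1{-}j<2n{+}1{-}i$ with arcs $(2n{+}1{-}\ell,\,2n{+}1{-}i)$ and $(2n{+}1{-}k,\,2n{+}1{-}j)$, which is again a nesting. So a left-crossing becomes a right-crossing, not a right-nesting. Your other route---rerunning the proof of (1) with ``nesting'' and ``crossing'' interchanged (adjacent openers cannot cross if $\lcr(M)=0$, hence they nest, and nesting is again transitive along the chain of consecutive openers since the closers now strictly decrease)---is the correct one and is what the paper intends by ``similar arguments.'' Drop the reflection remark and the proof is fine.
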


\begin{proof}
  We shall prove \eqref{first}. The remaining three statements are
  proved by similar arguments. Without loss of generality we may
  assume that $O'=\{o_i,o_{i+1},\dots,o_{i+j}\}$ is a maximal
  interval. If $M$ has no left-nestings then the arcs from $o_i$ and
  $o_{i+1}$ must cross. Similarly the arc from $o_{i+2}$ must cross
  the arc from $o_{i+1}$ and thus also the arc from $o_i$. So by an
  easy induction argument all the arcs must cross.
\end{proof}

\begin{proof}[Proof of Theorem~\ref{thm:no-neighbor-nestings}]
  Let $T\in\T_n$ be a $k\times k$ matrix. We shall show that there is
  a unique non-neighbor-nesting matchings $M$ of $[2n]$ such that
  $\psi(M)=T$.
  
  Let $r_i$ and $c_i$ be the sum of the entries in, respectively, row
  $i$ and column $i$ of $T$.  From the definition of $\psi$ it is
  clear that 
  $$O(M)=O_1\cup\dots\cup O_k\;\text{ and }\;C(M)=C_1\cup\dots\cup C_k,
  $$ where $O_1=[1,r_1]$, $C_1=[r_1+1,r_1+c_1]$, $O_2=[r_1+c_1+1,
    r_1+c_1+r_2]$, etc. Also, $M$ shall have $t_{ij}$ arcs from $O_i$
  to $C_j$. We must show how the arcs shall be drawn.

  Since $\lne(M)=0$ it follows from \eqref{first} of Lemma
  \ref{L:structure} that all arcs from $O_i$ cross. Thus we know which
  $t_{ij}$ openers, $X_{ij}$, in $O_i$ that will have arcs to
  $C_j$. Similarly, since $\rne(M)=0$ it follows by \eqref{second} of
  Lemma \ref{L:structure} that all arcs to $C_j$ cross. Thus we know
  which $t_{ij}$ of the closers, $Y_{ij}$, in $C_j$ that will have arcs
  from $O_i$. So, for every $1\le i\le j\le k$, $M$ must have crossing
  arcs from $X_{ij}$ to $Y_{ij}$.

  We have showed that there is exactly one $M$ that (by construction)
  satisfies $\psi(M)=T$, and so the theorem follows.
\end{proof}

\begin{theorem}\label{thm:no-neighbor-crossings}
  When restricted to matchings of \/$[2n]$\! with no neighbor crossings 
  the function $\psi$, defined above, is a bijection onto $\T_n$.
\end{theorem}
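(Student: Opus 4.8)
The plan is to mimic the proof of Theorem~\ref{thm:no-neighbor-nestings} almost verbatim, using the dual parts \eqref{third} and \eqref{fourth} of Lemma~\ref{L:structure} in place of \eqref{first} and \eqref{second}. Given a $k\times k$ matrix $T\in\T_n$, the sets of openers and closers of any preimage $M$ are forced in exactly the same way: with $r_i,c_i$ the row and column sums, we must have $O(M)=O_1\cup\dots\cup O_k$ and $C(M)=C_1\cup\dots\cup C_k$ with $O_1=[1,r_1]$, $C_1=[r_1+1,r_1+c_1]$, $O_2=[r_1+c_1+1,r_1+c_1+r_2]$, and so on. This part of the argument does not use anything about crossings or nestings, only that $\psi(M)=T$.

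Next I would pin down which openers in $O_i$ go to which block $C_j$. Since $\lcr(M)=0$, part~\eqref{third} of Lemma~\ref{L:structure} says that any two arcs with openers in the same maximal interval $O_i$ are nesting. A set of pairwise-nesting arcs sharing an interval of openers is linearly ordered: the outermost arc has the leftmost opener and the rightmost closer. Hence, reading $O_i$ from left to right, the openers must be grouped so that those going to $C_k$ come first, then those going to $C_{k-1}$, and so on down to $C_i$ (the reverse of the order in the no-neighbor-nesting case, where crossing forced the $C_i$-group first). In particular the multiset of targets $\{t_{ij}\}_j$ determines the partition of $O_i$ into consecutive blocks $X_{ij}$ of sizes $t_{ij}$. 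Dually, since $\rcr(M)=0$, part~\eqref{fourth} of the lemma forces the arcs into $C_j$ to be pairwise nesting, so reading $C_j$ from left to right the closers are grouped by source, with those coming from $O_1$ first, then $O_2$, etc., giving consecutive blocks $Y_{ij}$ of sizes $t_{ij}$. Finally, for each pair $i\le j$ the $t_{ij}$ arcs from $X_{ij}$ to $Y_{ij}$ must themselves be pairwise nesting (again by~\eqref{third} or~\eqref{fourth}), which determines them uniquely. Thus at most one $M$ can satisfy $\psi(M)=T$, and conversely the $M$ so constructed does satisfy $\psi(M)=T$ by construction; one then checks it genuinely has no neighbor crossings. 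Since $|\T_n|=\fish_n$, this gives $|\{M\in\M_n:\text{no neighbor crossings}\}|=\fish_n$ as well.

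The step I expect to require the most care is verifying that the matching $M$ assembled from these nesting blocks actually has $\lcr(M)=\rcr(M)=0$ — i.e.\ that the construction is not merely forced-if-it-exists but really lands in the target class. Two arcs with openers in distinct maximal intervals $O_i\ne O_{i'}$ are separated by at least one closer, so neither a left-crossing nor a left-nesting between them is possible; within a single $O_i$ the arcs are nesting by construction, so no left-crossing arises there either. The symmetric argument with $C_j$ rules out right-crossings. One subtlety worth a sentence: a left-crossing requires two openers $i,i+1$ that are \emph{adjacent} integers, hence in the same maximal opener-interval, so it suffices to control arcs sharing an opener-block, which is exactly what the block structure gives; likewise for right-crossings. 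Given this, the bijection is complete, and I would simply remark that the proof is the ``mirror image'' of that of Theorem~\ref{thm:no-neighbor-nestings}, with nesting and crossing interchanged.
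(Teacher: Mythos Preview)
Your approach is exactly the paper's: it simply says the proof is essentially the same as for Theorem~\ref{thm:no-neighbor-nestings}, using \eqref{third} and \eqref{fourth} of Lemma~\ref{L:structure} instead of \eqref{first} and \eqref{second}. Your elaboration of the details is welcome and the logic is sound.

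One slip to fix: the ordering of the blocks $Y_{ij}$ inside $C_j$ is stated backwards. If the arcs with closers in $C_j$ are pairwise nesting, then the \emph{leftmost} (innermost) closer is matched to the \emph{rightmost} opener, so reading $C_j$ from left to right the closers come from $O_j$ first, then $O_{j-1}$, and so on down to $O_1$ --- the reverse of what you wrote. (You can see this in the paper's $n=3$ table: for $T=\left(\begin{smallmatrix}1&1\\0&1\end{smallmatrix}\right)$ the non-neighbor-crossing matching has $C_2=\{5,6\}$ with closer $5$ coming from $O_2$ and closer $6$ from $O_1$.) This does not affect the uniqueness argument, since the point is only that the partition is forced, but it would matter if one tried to write down the inverse map explicitly.
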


\begin{proof}
  The proof is essentially the same as for
  Theorem~\ref{thm:no-neighbor-nestings}. The difference is that
  here we use \eqref{third} and \eqref{fourth} of
  Lemma~\ref{L:structure}, instead of \eqref{first} and \eqref{second}.
\end{proof}

Below is an illustration of Theorems~\ref{thm:no-neighbor-nestings}
and \ref{thm:no-neighbor-crossings} for $n=3$. We have circled the
openers to make it easy to see the intervals.

\renewcommand{\discstyle}{\tikzstyle{disc} = [
    circle,thin,fill=\cfilll,draw=black, minimum size=3.1pt, inner
    sep=0pt ] }
$$
\begin{array}{ccccc}
  \left(
  \begin{smallmatrix}
    1 & 0 & 0 \\
    0 & 1 & 0 \\
    0 & 0 & 1
  \end{smallmatrix}
  \right) &
  \left(
  \begin{smallmatrix}
    1 & 0 \\
    0 & 2
  \end{smallmatrix}
  \right) &
  \left(
  \begin{smallmatrix}
    2 & 0 \\
    0 & 1
  \end{smallmatrix}
  \right) &
  \left(
  \begin{smallmatrix}
    1 & 1 \\
    0 & 1
  \end{smallmatrix}
  \right) &
  \left(\hspace{-0.2pt}
  \begin{smallmatrix}
    3
  \end{smallmatrix}\hspace{-0.2pt}
  \right)\\[1.8ex]
  \begin{tikzpicture}[line width=.45pt, scale=0.33]
    \nodestyle
    \tikzstyle{opener} = [thin, circle, draw=black, inner sep=0.7pt]
  \filldraw (1,0) circle (1.9pt); \node[opener] at (1,-0.7) {1};
  \filldraw (2,0) circle (1.9pt); \node at (2,-0.7) {2};
  \filldraw (3,0) circle (1.9pt); \node[opener] at (3,-0.7) {3};
  \filldraw (4,0) circle (1.9pt); \node at (4,-0.7) {4};
  \filldraw (5,0) circle (1.9pt); \node[opener] at (5,-0.7) {5};
  \filldraw (6,0) circle (1.9pt); \node at (6,-0.7) {6};
   \draw[very thin] (0.5,0) -- (6.500000,0);
  \draw (2,0) arc (0:180:0.5000);
  \draw (4,0) arc (0:180:0.5000);
  \draw (6,0) arc (0:180:0.5000);
  \end{tikzpicture}&
  \begin{tikzpicture}[line width=.45pt, scale=0.33]
    \nodestyle
    \tikzstyle{opener} = [thin, circle, draw=black, inner sep=0.7pt]  
  \filldraw (1,0) circle (1.9pt); \node[opener] at (1,-0.7) {1};
  \filldraw (2,0) circle (1.9pt); \node at (2,-0.7) {2};
  \filldraw (3,0) circle (1.9pt); \node[opener] at (3,-0.7) {3};
  \filldraw (4,0) circle (1.9pt); \node[opener] at (4,-0.7) {4};
  \filldraw (5,0) circle (1.9pt); \node at (5,-0.7) {5};
  \filldraw (6,0) circle (1.9pt); \node at (6,-0.7) {6};
   \draw[very thin] (0.5,0) -- (6.500000,0);
  \draw (2,0) arc (0:180:0.5000);
  \draw (5,0) arc (0:180:1.0000);
  \draw (6,0) arc (0:180:1.0000);
  \end{tikzpicture}&
  \begin{tikzpicture}[line width=.45pt, scale=0.33]
    \nodestyle
    \tikzstyle{opener} = [thin, circle, draw=black, inner sep=0.7pt]
  \filldraw (1,0) circle (1.9pt); \node[opener] at (1,-0.7) {1};
  \filldraw (2,0) circle (1.9pt); \node[opener] at (2,-0.7) {2};
  \filldraw (3,0) circle (1.9pt); \node at (3,-0.7) {3};
  \filldraw (4,0) circle (1.9pt); \node at (4,-0.7) {4};
  \filldraw (5,0) circle (1.9pt); \node[opener] at (5,-0.7) {5};
  \filldraw (6,0) circle (1.9pt); \node at (6,-0.7) {6};
   \draw[very thin] (0.5,0) -- (6.500000,0);
  \draw (3,0) arc (0:180:1.0000);
  \draw (4,0) arc (0:180:1.0000);
  \draw (6,0) arc (0:180:0.5000);
  \end{tikzpicture}&
  \begin{tikzpicture}[line width=.45pt, scale=0.33]
    \nodestyle
    \tikzstyle{opener} = [thin, circle, draw=black, inner sep=0.7pt]  
  \filldraw (1,0) circle (1.9pt); \node[opener] at (1,-0.7) {1};
  \filldraw (2,0) circle (1.9pt); \node[opener] at (2,-0.7) {2};
  \filldraw (3,0) circle (1.9pt); \node at (3,-0.7) {3};
  \filldraw (4,0) circle (1.9pt); \node[opener] at (4,-0.7) {4};
  \filldraw (5,0) circle (1.9pt); \node at (5,-0.7) {5};
  \filldraw (6,0) circle (1.9pt); \node at (6,-0.7) {6};
   \draw[very thin] (0.5,0) -- (6.500000,0);
  \draw (3,0) arc (0:180:1.0000);
  \draw (5,0) arc (0:180:1.5000);
  \draw (6,0) arc (0:180:1.0000);
  \end{tikzpicture}&
  \begin{tikzpicture}[line width=.45pt, scale=0.33]
    \nodestyle
    \tikzstyle{opener} = [thin, circle, draw=black, inner sep=0.7pt]
  \filldraw (1,0) circle (1.9pt); \node[opener] at (1,-0.7) {1};
  \filldraw (2,0) circle (1.9pt); \node[opener] at (2,-0.7) {2};
  \filldraw (3,0) circle (1.9pt); \node[opener] at (3,-0.7) {3};
  \filldraw (4,0) circle (1.9pt); \node at (4,-0.7) {4};
  \filldraw (5,0) circle (1.9pt); \node at (5,-0.7) {5};
  \filldraw (6,0) circle (1.9pt); \node at (6,-0.7) {6};
   \draw[very thin] (0.5,0) -- (6.500000,0);
  \draw (4,0) arc (0:180:1.5000);
  \draw (5,0) arc (0:180:1.5000);
  \draw (6,0) arc (0:180:1.5000);
  \end{tikzpicture} \\[1ex]
  \begin{tikzpicture}[line width=.45pt, scale=0.33]
    \nodestyle
    \tikzstyle{opener} = [thin, circle, draw=black, inner sep=0.7pt]  
  \filldraw   (1,0) circle (1.9pt); \node[opener] at (1,-0.7) {1};
  \filldraw   (2,0) circle (1.9pt); \node at (2,-0.7) {2};
  \filldraw   (3,0) circle (1.9pt); \node[opener] at (3,-0.7) {3};
  \filldraw   (4,0) circle (1.9pt); \node at (4,-0.7) {4};
  \filldraw   (5,0) circle (1.9pt); \node[opener] at (5,-0.7) {5};
  \filldraw   (6,0) circle (1.9pt); \node at (6,-0.7) {6};
   \draw[very thin] (0.5,0) -- (6.500000,0);
  \draw (2,0) arc (0:180:0.5000);
  \draw (4,0) arc (0:180:0.5000);
  \draw (6,0) arc (0:180:0.5000);
  \end{tikzpicture} &
  \begin{tikzpicture}[line width=.45pt, scale=0.33]
    \nodestyle
    \tikzstyle{opener} = [thin, circle, draw=black, inner sep=0.7pt]
  \filldraw   (1,0) circle (1.9pt); \node[opener] at (1,-0.7) {1};
  \filldraw   (2,0) circle (1.9pt); \node at (2,-0.7) {2};
  \filldraw   (3,0) circle (1.9pt); \node[opener] at (3,-0.7) {3};
  \filldraw   (4,0) circle (1.9pt); \node[opener] at (4,-0.7) {4};
  \filldraw   (5,0) circle (1.9pt); \node at (5,-0.7) {5};
  \filldraw   (6,0) circle (1.9pt); \node at (6,-0.7) {6};
   \draw[very thin] (0.5,0) -- (6.500000,0);
  \draw (2,0) arc (0:180:0.5000);
  \draw (5,0) arc (0:180:0.5000);
  \draw (6,0) arc (0:180:1.5000);
  \end{tikzpicture} &
  \begin{tikzpicture}[line width=.45pt, scale=0.33]
    \nodestyle
    \tikzstyle{opener} = [thin, circle, draw=black, inner sep=0.7pt]  
  \filldraw   (1,0) circle (1.9pt); \node[opener] at (1,-0.7) {1};
  \filldraw   (2,0) circle (1.9pt); \node[opener] at (2,-0.7) {2};
  \filldraw   (3,0) circle (1.9pt); \node at (3,-0.7) {3};
  \filldraw   (4,0) circle (1.9pt); \node at (4,-0.7) {4};
  \filldraw   (5,0) circle (1.9pt); \node[opener] at (5,-0.7) {5};
  \filldraw   (6,0) circle (1.9pt); \node at (6,-0.7) {6};
   \draw[very thin] (0.5,0) -- (6.500000,0);
  \draw (3,0) arc (0:180:0.5000);
  \draw (4,0) arc (0:180:1.5000);
  \draw (6,0) arc (0:180:0.5000);
  \end{tikzpicture} &
  \begin{tikzpicture}[line width=.45pt, scale=0.33]
    \nodestyle
    \tikzstyle{opener} = [thin, circle, draw=black, inner sep=0.7pt]  
  \filldraw   (1,0) circle (1.9pt); \node[opener] at (1,-0.7) {1};
  \filldraw   (2,0) circle (1.9pt); \node[opener] at (2,-0.7) {2};
  \filldraw   (3,0) circle (1.9pt); \node at (3,-0.7) {3};
  \filldraw   (4,0) circle (1.9pt); \node[opener] at (4,-0.7) {4};
  \filldraw   (5,0) circle (1.9pt); \node at (5,-0.7) {5};
  \filldraw   (6,0) circle (1.9pt); \node at (6,-0.7) {6};
   \draw[very thin] (0.5,0) -- (6.500000,0);
  \draw (3,0) arc (0:180:0.5000);
  \draw (5,0) arc (0:180:0.5000);
  \draw (6,0) arc (0:180:2.5000);
  \end{tikzpicture} &
  \begin{tikzpicture}[line width=.45pt, scale=0.33]
    \nodestyle
    \tikzstyle{opener} = [thin, circle, draw=black, inner sep=0.7pt]  
  \filldraw   (1,0) circle (1.9pt); \node[opener] at (1,-0.7) {1};
  \filldraw   (2,0) circle (1.9pt); \node[opener] at (2,-0.7) {2};
  \filldraw   (3,0) circle (1.9pt); \node[opener] at (3,-0.7) {3};
  \filldraw   (4,0) circle (1.9pt); \node at (4,-0.7) {4};
  \filldraw   (5,0) circle (1.9pt); \node at (5,-0.7) {5};
  \filldraw   (6,0) circle (1.9pt); \node at (6,-0.7) {6};
   \draw[very thin] (0.5,0) -- (6.500000,0);
  \draw (4,0) arc (0:180:0.5000);
  \draw (5,0) arc (0:180:1.5000);
  \draw (6,0) arc (0:180:2.5000);
  \end{tikzpicture}
\end{array}
$$
\renewcommand{\discstyle}{\tikzstyle{disc} = 
  [ circle,thin,fill=\cfilll,draw=black, minimum size=\ns, inner sep=0pt ] }

We have now explained the hierarchy of nesting and crossing conditions
that we set out to explain in the beginning of this section. As we
pointed out, the bijections for the more general cases do not
specialize to give bijections between the smaller sets.  Indeed, if we
specialize the map $\psi$ to matchings with no nestings we get the
subset of matrices $(t_{ij})\in \T_n$ such that for all $i,j,x,y>0$,
at least one of $t_{i,j}$ and $t_{i-x,j+y}$ must be zero. The non-zero
entries in such a matrix will form a ``path'' with the entries as
vertices, which can be seen to be equivalent to a Motzkin path.  Thus,
the matrices just described are in bijection with Motzkin paths with
positive integer weights on the vertices of the path such that the sum
of the weights is $n$.  As an example, for $n=3$ we have these five
paths:
$$
\begin{tikzpicture}[line width=.45pt, scale=0.33]
  \nodestyle
  \filldraw (1,0) circle (2.2pt); \node at (1,-0.7) {1};
  \filldraw (2,0) circle (2.2pt); \node at (2,-0.7) {1};
  \filldraw (3,0) circle (2.2pt); \node at (3,-0.7) {1};
  \draw (1,0) -- (2,0) -- (3,0);
\end{tikzpicture}\quad
\begin{tikzpicture}[line width=.45pt, scale=0.33]
  \nodestyle
  \filldraw (1,0) circle (2.2pt); \node at (1,-0.7) {1};
  \filldraw (2,0) circle (2.2pt); \node at (2,-0.7) {2};
  \draw (1,0) -- (2,0);
\end{tikzpicture}\quad
\begin{tikzpicture}[line width=.45pt, scale=0.33]
  \nodestyle
  \filldraw (1,0) circle (2.2pt); \node at (1,-0.7) {2};
  \filldraw (2,0) circle (2.2pt); \node at (2,-0.7) {1};
  \draw (1,0) -- (2,0);
\end{tikzpicture}\quad
\begin{tikzpicture}[line width=.45pt, scale=0.33]
  \nodestyle
  \filldraw (1.0, 0)   circle (2.2pt); \node at (1.0,-0.7) {1};
  \filldraw (1.8, 0.9) circle (2.2pt); \node at (1.8, 1.6) {1};
  \filldraw (2.6, 0)   circle (2.2pt); \node at (2.6,-0.7) {1};
  \draw (1.0, 0) -- (1.8, 0.9) -- (2.6, 0);
\end{tikzpicture}\quad
\begin{tikzpicture}[line width=.45pt, scale=0.33]
  \nodestyle
  \filldraw (1,0) circle (2.2pt); \node at (1,-0.7) {3};
\end{tikzpicture}\quad
$$
If we on the other hand specialize $\psi$ to matchings
with no crossings we get the somewhat odd constraint that for all
$i<i+x\le j<j+y$ at least one of $t_{i,j}$ and $t_{i+x,j+y}$ must be
zero.

\begin{corollary}
  The two subsets of $\T_n$ mentioned above are enumerated by the
  Catalan numbers.
\end{corollary}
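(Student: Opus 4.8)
The plan is to route both enumerations through classical families of matchings, using the bijections already established. Recall from the discussion preceding the corollary that the first subset of $\T_n$ is exactly the image under $\psi$ of the non-nesting matchings of $[2n]$, and the second subset is the image under $\psi$ of the non-crossing matchings of $[2n]$. First I would observe the obvious containments: a matching with no nestings has in particular no left-nestings and no right-nestings, hence no neighbor nestings; symmetrically, a matching with no crossings has no neighbor crossings. Therefore the map $\psi$, which by Theorem~\ref{thm:no-neighbor-nestings} is a bijection from the non-neighbor-nesting matchings of $[2n]$ onto $\T_n$, restricts to a bijection from the non-nesting matchings of $[2n]$ onto the first subset; and by Theorem~\ref{thm:no-neighbor-crossings}, $\psi$ restricts to a bijection from the non-crossing matchings of $[2n]$ onto the second subset.

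It then remains to invoke the classical fact that the non-nesting matchings of $[2n]$ and the non-crossing matchings of $[2n]$ are each enumerated by the Catalan number $C_n$. Concretely, in either case the underlying sequence of openers and closers read from left to right is a Dyck path of semilength $n$, and for each such path there is exactly one non-crossing (respectively, non-nesting) matching realizing it, so that both families are in bijection with Dyck paths of semilength $n$; bijections directly between the two families of matchings are given in \cite{deM,deSC,KaZe}. Combining this with the previous paragraph shows that each of the two subsets of $\T_n$ has cardinality $C_n$, which is the assertion of the corollary. As a sanity check, for $n=3$ both subsets equal all of $\T_3$, and indeed $|\T_3|=\fish_3=5=C_3$, matching the five weighted Motzkin paths displayed above.

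There is no genuine obstacle here: the only thing that needs care is the immediate implication that forbidding all nestings forbids neighbor nestings, and its crossing analogue, so that the bijections of Theorems~\ref{thm:no-neighbor-nestings} and~\ref{thm:no-neighbor-crossings} restrict as claimed. If instead one wanted a self-contained enumeration of the first subset, one could count the vertex-weighted Motzkin paths described above directly: assigning each vertex the weight-generating function $x/(1-x)$ and using the standard first-step decomposition of a Motzkin path gives $G=\tfrac{x}{1-x}(1+G+G^2)$ for their generating function $G$, and a short manipulation rewrites this as $G=x(1+G)^2$, so that $1+G$ satisfies the Catalan functional equation $C=1+xC^2$. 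I would nonetheless present the argument through non-nesting matchings, since it is shorter and, unlike the path interpretation, simultaneously handles the crossing case.
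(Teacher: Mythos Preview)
Your argument is correct and is exactly the reasoning the paper intends: the corollary is stated without proof because the two subsets are by definition the $\psi$-images of the non-nesting and non-crossing matchings, and since these matchings have in particular no neighbor nestings (respectively crossings), Theorems~\ref{thm:no-neighbor-nestings} and~\ref{thm:no-neighbor-crossings} make $\psi$ injective on them; the Catalan count then comes from the classical enumeration of non-nesting and non-crossing matchings. Your optional generating-function check for the weighted Motzkin description is a nice addition but not needed.
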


Before we close this section we give one more result that is almost
for free given the map $\psi$. Let $\T^{01}_n\subset \T_n$ be the set
of zero-one matrices in $\T_n$. For instance,
$$\T^{01}_3 =
\left\{\,\left(
\begin{smallmatrix}
  1 & 1 \\
  0 & 1
\end{smallmatrix}
\right),\,
\left(
\begin{smallmatrix}
  1 & 0 & 0 \\
  0 & 1 & 0 \\
  0 & 0 & 1
\end{smallmatrix}
\right)\,\right\}.
$$
Dukes and Parviainen~\cite[\S4]{DP} showed that the matrices in
$\T^{01}_n$ correspond to those ascent sequences that have no two
equal consecutive entries. We offer the following proposition.

\begin{proposition}\label{prop:01-matrices}
  When restricted to matchings of \/$[2n]$\! with no left-nestings and
  no right-crossings the function $\psi$, defined above, is a
  bijection onto $\T^{01}_n$.
\end{proposition}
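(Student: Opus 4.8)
The plan is to show that the image of a left-nesting-free, right-crossing-free matching under $\psi$ lands in $\T^{01}_n$, and that $\psi$ restricted to this class is both injective and surjective onto $\T^{01}_n$. I would run the argument along the same lines as the proof of Theorem~\ref{thm:no-neighbor-nestings}, but now invoking parts \eqref{first} and \eqref{fourth} of Lemma~\ref{L:structure}: since $\lne(M)=0$, any two arcs with openers in the same interval $O_i$ are crossing; since $\rcr(M)=0$, any two arcs with closers in the same interval $C_j$ are nesting. The key numerical observation is that these two conditions force $t_{ij}\le 1$ for all $i,j$: if $t_{ij}\ge 2$, then there would be two arcs with openers in $O_i$ and closers in $C_j$, which by part \eqref{first} must cross (using $\lne(M)=0$) but by part \eqref{fourth} must nest (using $\rcr(M)=0$) — a contradiction. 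Hence $\psi(M)\in\T^{01}_n$.

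For surjectivity and injectivity I would argue exactly as in Theorem~\ref{thm:no-neighbor-nestings}. Given $T=(t_{ij})\in\T^{01}_n$, a $k\times k$ matrix, the row and column sums $r_i,c_i$ determine the intervals $O_1=[1,r_1]$, $C_1=[r_1+1,r_1+c_1]$, $O_2=[r_1+c_1+1,r_1+c_1+r_2]$, and so on, just as before; and $M$ must have an arc from $O_i$ to $C_j$ precisely when $t_{ij}=1$. Since $\lne(M)=0$, part \eqref{first} of Lemma~\ref{L:structure} tells us which opener in $O_i$ carries that arc (the arcs from $O_i$ must cross, so they are "sorted" by the order of their target intervals). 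Since $\rcr(M)=0$, part \eqref{fourth} tells us which closer in $C_j$ receives it (the arcs into $C_j$ must nest, so again the assignment is forced). This pins down $M$ uniquely; conversely, the matching so constructed is checked to have no left-nesting and no right-crossing, because within each $O_i$ the arcs cross by construction and within each $C_j$ they nest by construction, while arcs with openers (resp.\ closers) in different maximal intervals cannot form a left-nesting (resp.\ a right-crossing) since the relevant neighbor indices straddle an interval boundary.

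The main obstacle — though it is mild — is verifying that the matching reconstructed from $T$ genuinely has $\lne(M)=0$ and $\rcr(M)=0$, i.e.\ that the "crossing within $O_i$, nesting within $C_j$" recipe is consistent and does not accidentally create a forbidden neighbor pattern across interval boundaries. This is the same point that had to be checked in Theorem~\ref{thm:no-neighbor-nestings}, and it dissolves once one notes that a left-nesting $(i,\ell),(i+1,k)$ has its two openers $i,i+1$ adjacent, hence in the same maximal opener interval, where by construction the arcs cross rather than nest; and symmetrically a right-crossing has its two closers adjacent, hence in the same maximal closer interval, where by construction the arcs nest rather than cross. So in fact no extra work beyond the $t_{ij}\le1$ observation and an appeal to the proof scheme of Theorem~\ref{thm:no-neighbor-nestings} is needed.
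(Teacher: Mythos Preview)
Your proposal is correct and follows essentially the same approach as the paper: the paper's proof is a brief sketch that defers to the argument of Theorem~\ref{thm:no-neighbor-nestings} and singles out the same key observation you make, namely that parts \eqref{first} and \eqref{fourth} of Lemma~\ref{L:structure} together force $t_{ij}\le 1$. Your write-up is in fact more detailed than the paper's, which omits the verification that the reconstructed matching avoids the forbidden patterns.
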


\begin{proof}
  The proof is very similar to the proofs of
  Theorems~\ref{thm:no-neighbor-nestings} and
  \ref{thm:no-neighbor-crossings} so we omit most of it. We do however
  make this key observation: Assume that $1\le i\le j\le
  \inter(M)$. From \eqref{first} and \eqref{fourth} of
  Lemma~\ref{L:structure} there can be at most one arc from the $O_i$
  to $O_j$.
\end{proof}

An important remark is that there exists a bijection by Sundaram
\cite{sund} (see also exercise 7.24 in \cite{St2}), via certains walks
in the Youngs lattice called oscillating tableaux, that uniformly
shows all three cases above. For readers familiar with this bijection
let us briefly explain why.  Let $M$ be a matching of $[2n]$ in which
$j$ and $j+1$ are two consecutive closers.  Let
$(\la^0,\dots,\la^{2n})$, where $\la^0=\la^{2n}=\emptyset$, be the
oscillating tableau corresponding to $M$. The assumption that $j$ and
$j+1$ are closers means that $\la^{j+1}\subset \la^{j}\subset
\la^{j-1}$.

Let $r_{j+1}$ be the row where $\la^{j+1}$ has fewer elements than
$\la^{j}$ and let $r_{j}$ be the row where $\la^{j}$ has fewer
elements than $\la^{j-1}$. Following the bumping paths of column
insertion one can argue that the arcs ending in $j$ and $j+1$ form a
right-crossing if and only if $r_{j}\le r_{j+1}$, and thus they form a
right-nesting if and only if $r_{j}>r_{j+1}$.  Now, consider the
involution $M^*$ obtained by conjugating each partition in
$(\la^0,\dots,\la^{2n})$ and then applying the inverse of Sundarams
bijection.  A moment of thought gives that the arcs ending in $j$ and
$j+1$ form a right-nesting in $M$ if and only if they form a
right-crossing in $M^*$.  Similarly, if $i$ and $i+1$ are two
consecutive openers of $M$, then $\la^{i-1}\subset\la^{i}\subset
\la^{i+1}$. This time let $r_x$ be the row in which $\la^{x}$ is
greater than $\la^{x-1}$. Then the arcs with openers $i$ and $i+1$
form a left-nesting if and only if $r_i<r_{i+1}$.  Hence $i$ and $i+1$
form a left-nesting in $M$ if and only if they form a left-crossing in
$M^*$.

This shows that the bijection in \cite{sund} may be used to explain
all three levels discussed here at once. It also shows that using the
above restrictions we get two different subets of all vacillating
tableaux enumerated by $n!$ and one subset, satisfying both
restrictions, that is enumerated by the Fishburn numbers.

\section{Ascent and descent correcting sequences}

On contemplating the picture 
$$
\begin{tikzpicture}[line width=.5pt, scale=0.45]  
  \draw[very thin, dashed] (0,0) -- (9,0);
  \node[font=\footnotesize] at (1,1.15) {$\al_k$};
  \node[font=\footnotesize] at (6.8,2.3) {$\al_{i+1}$};
  \node[font=\footnotesize] at (5,1.4) {$\al_i$};
  \node[font=\footnotesize] at (8.57,1.15) {$\al_{\ell}$};
  \draw (3,0) arc (0:180:1.0000);
  \draw (6,0) arc (0:180:1.0000);
  \draw (7,0) arc (0:180:2.5000);
  \draw[dashed] (8.5,0) arc (0:180:1.0000);
  \filldraw (1,0) circle (1.9pt);
  \filldraw (2,0) circle (1.9pt);
  \filldraw (3,0) circle (1.9pt);
  \filldraw (4,0) circle (1.9pt);
  \filldraw (6,0) circle (1.9pt);
  \filldraw[fill=white] (6.5,0) circle (1.9pt);
  \filldraw (7,0) circle (1.9pt);
  \filldraw[fill=white] (8.5,0) circle (1.9pt);
\end{tikzpicture}
$$ for a while, one realizes that condition~\eqref{1} in
Proposition~\ref{prop:2+2-free} is equivalent to
\begin{equation}\label{1var}
  i>_P k\text{ and } i+1\not>_P k
  \implies i=\pre(\ell) \text{ for some $\ell$ in $P$.} 
\end{equation}
Let a \emph{descent correcting sequence} be an inversion table
$(a_1,\dots,a_n)$ such that
$$a_{i}>a_{i+1}\implies a_\ell=i \,\text{ for some $\ell>i$}.
$$ That is, if there is a descent at position $i$ then this has to be
``corrected'' by the value $i$ occurring later in the
sequence. Condition~\eqref{1var} translates directly to the condition
for a descent correcting sequence, and thus we have the following
Proposition.

\newcommand{\hs}{\hspace*{-1pt}}
\begin{proposition} 
  There are exactly\/ $\fish_n$\hs\ descent correcting sequences of length
  $n$, where $\fish_n$\hs\ is the $n$th Fishburn number.
\end{proposition}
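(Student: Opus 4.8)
The plan is to transport the bijection of Proposition~\ref{prop:2+2-free} along the bijection $g$ of Theorem~\ref{thm:factorial-poset}. Recall that $g\colon\F_n\to\I_n$ sends a factorial poset $P$ to the inversion table $(a_1,\dots,a_n)$ with $a_k=\pre(k)$, and that its inverse reconstructs $P$ by declaring $i<_P k$ exactly when $1\le i\le a_k$. By Proposition~\ref{prop:2+2-free} the subset of $\F_n$ consisting of posets that satisfy \eqref{1} has cardinality $\fish_n$, so it is enough to show that $g$ restricts to a bijection from this subset onto the set of descent correcting sequences of length $n$. Since $g$ is already a bijection, this reduces to the claim that, for $P\in\F_n$, the poset $P$ satisfies \eqref{1} if and only if $g(P)=(a_1,\dots,a_n)$ is a descent correcting sequence.

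To prove that claim I would rewrite condition \eqref{1var}, which was observed above to be equivalent to \eqref{1}, purely in terms of the $a_k$. Using $i<_P k\iff 1\le i\le a_k$, the hypothesis ``$i>_P k$ and $i+1\not>_P k$'' becomes ``$k\le a_i$ and $k>a_{i+1}$'', that is $a_{i+1}<k\le a_i$; such a $k$ in $[n]$ exists if and only if $a_i>a_{i+1}$, i.e.\ precisely when there is a descent at position $i$. The conclusion ``$i=\pre(\ell)$ for some $\ell$ in $P$'' becomes ``$a_\ell=i$ for some $\ell$''. Thus \eqref{1var} translates to: $a_i>a_{i+1}$ implies $a_\ell=i$ for some $\ell$. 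Finally, any such witness $\ell$ automatically satisfies $\ell>i$, since $a_\ell\le\ell-1$ forces $\ell\ge a_\ell+1=i+1$; hence the translated condition is exactly the descent correcting condition, which establishes the claim.

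Combining the two steps, $g$ carries $\{P\in\F_n : P \text{ satisfies }\eqref{1}\}$ bijectively onto the set of descent correcting sequences of length $n$, so by Proposition~\ref{prop:2+2-free} there are exactly $\fish_n$ of the latter. The only point that needs genuine care is the faithful passage from the order-theoretic statement \eqref{1var} to a statement about the entries $a_1,\dots,a_n$---in particular the observation that ``for some $\ell$'' may be read as ``for some $\ell>i$'' at no cost---but given Theorem~\ref{thm:factorial-poset} this is routine, and indeed it is already implicit in the discussion preceding the proposition.
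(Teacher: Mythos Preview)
Your argument is correct and is exactly the approach the paper takes: the paper simply asserts that condition~\eqref{1var} ``translates directly'' to the descent correcting condition via the bijection $g$ of Theorem~\ref{thm:factorial-poset}, and you have carefully spelled out that translation, including the small but necessary observation that $a_\ell=i$ forces $\ell>i$.
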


We may similarly use the map $f_{\nc}$ from matchings with no
left-crossings to inversion tables. We then get that the sequences
corresponding to matchings with no neighbor crossings are the
inversion tables $(a_1,\dots,a_n)$ such that
$$a_{i}<a_{i+1}\neq i+1 \implies a_\ell=i \,\text{ for some $\ell>i$}. 
$$
We call them \emph{ascent correcting sequences}. Using 
Theorem~\ref{thm:no-neighbor-crossings} we arrive at the following result.

\begin{proposition} 
  There are exactly\/ $\fish_n$\hs\ ascent correcting sequences of
  length $n$.
\end{proposition}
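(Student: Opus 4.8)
The plan is to reduce the proposition, via two bijections that are already available, to one combinatorial identity about inversion tables. By Theorem~\ref{thm:no-left-crossings} the map $f_{\nc}$ is a bijection from the inversion tables of length $n$ onto the matchings of $[2n]$ with no left-crossing, and the ascent correcting sequences of length $n$ are by definition a subset of that domain. On the other hand, by Theorem~\ref{thm:no-neighbor-crossings} together with $|\T_n|=\fish_n$, there are exactly $\fish_n$ matchings of $[2n]$ with no neighbor crossings, that is, with neither left- nor right-crossings. So it suffices to establish the following claim: $f_{\nc}$ restricts to a bijection from the ascent correcting sequences of length $n$ onto the matchings of $[2n]$ with no neighbor crossings; equivalently, for $w=(a_1,\dots,a_n)$ in $\I_n$ the matching $f_{\nc}(w)$ has a right-crossing if and only if $w$ is \emph{not} ascent correcting.

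To prove the claim I would first record the inverse of $f_{\nc}$, by exactly the argument used for $f$ in Theorem~\ref{thm:no-left-nestings}: writing $\al_1,\dots,\al_n$ for the arcs of $M=f_{\nc}(w)$ ordered by closer, $a_i$ is the number of closers of $M$ lying to the left of the opener of $\al_i$. Next I would reduce right-crossings to neighboring arcs: a right-crossing is formed by two arcs whose closers occupy adjacent positions, and if the closers of $\al_p$ and $\al_q$ with $p<q$ are adjacent then $q=p+1$, since otherwise the closer of some arc between them in the closer-order would have to lie between them. Hence $f_{\nc}(w)$ has a right-crossing exactly when, for some $i\in[n-1]$, the arcs $\al_i$ and $\al_{i+1}$ have adjacent closers and cross.

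The heart of the argument---and what I expect to be the main obstacle---is to read the conditions ``$\al_i$ and $\al_{i+1}$ have adjacent closers'' and ``$\al_i$ and $\al_{i+1}$ cross'' off the entries of $w$, by following the recursive construction of $f_{\nc}$ stage by stage; this is the crossings counterpart of the computation with $\pre$ and $\suc$ that precedes Proposition~\ref{prop:2+2-free}. When $\al_{i+1}$ is inserted at stage $i+1$ its closer is placed at the far right, so just after that stage the closers of $\al_i$ and $\al_{i+1}$ are adjacent unless $a_{i+1}=i$, in which case the opener of $\al_{i+1}$ is itself placed between them and $\al_{i+1}$ becomes a short arc lying entirely to the right of $\al_i$. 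They then remain adjacent through the later stages unless some arc $\al_\ell$ with $\ell>i+1$ has $a_\ell=i$, because such an $\al_\ell$ is precisely an arc whose opener is inserted immediately to the right of the closer of $\al_i$. Thus $\al_i$ and $\al_{i+1}$ have adjacent closers if and only if the value $i$ does not occur among $a_{i+1},\dots,a_n$. Given adjacent closers, both openers lie to the left of the closer of $\al_i$, so the pair crosses or nests according to the order of the two openers, and tracking the construction shows that the opener of $\al_{i+1}$ precedes that of $\al_i$ (a right-nesting) when $a_i\ge a_{i+1}$ and follows it (a right-crossing) when $a_i<a_{i+1}$.

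Combining these, $\al_i$ and $\al_{i+1}$ form a right-crossing in $f_{\nc}(w)$ exactly when $a_i<a_{i+1}$ and $i\notin\{a_{i+1},\dots,a_n\}$. Therefore $f_{\nc}(w)$ has no right-crossing if and only if, for every $i\in[n-1]$, $a_i<a_{i+1}$ implies $a_\ell=i$ for some $\ell>i$; and since $a_{i+1}\le i$ always holds, the extra clause $a_{i+1}\ne i+1$ appearing in the definition is vacuous, so this is exactly the ascent correcting condition. This proves the claim of the first paragraph, and with it the asserted count of $\fish_n$ ascent correcting sequences of length $n$.
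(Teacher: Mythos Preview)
Your argument is correct and follows exactly the route the paper takes: it uses $f_{\nc}$ to identify the inversion tables that correspond under this bijection to matchings with no neighbor crossings, and then invokes Theorem~\ref{thm:no-neighbor-crossings} (together with $|\T_n|=\fish_n$) to count them. The paper states this correspondence in one sentence by analogy with the descent-correcting case, whereas you supply the details---the reduction to consecutive arcs $\al_i,\al_{i+1}$, the ``adjacent closers iff $i\notin\{a_{i+1},\dots,a_n\}$'' analysis, and the opener-order dichotomy---so your write-up is strictly more explicit but not a different method. Your observation that the clause $a_{i+1}\neq i+1$ in the paper's definition is vacuous (since $a_{i+1}\le i$) is also correct.
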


\section{Posets that are both factorial and dually factorial}

Note that being dually factorial entails the condition in
Proposition~\ref{prop:2+2-free}. So, under $h$, matchings
corresponding to dually factorial posets have no right-nestings. In
fact, they do not have any nestings at all. To see this, assume that
$M\in\N_n$ and let $\al_1,\dots,\al_n$ are its arcs ordered by
closer. Also, assume that $1\leq i<j\leq n$. Recall that the arcs
$\al_i$ and $\al_j$ form a nesting precisely when $\pre(i)>\pre(j)$,
which is equivalent to there being a $k<_Pi$ such that $k\not<_Pj$;
this cannot happen in a dually factorial poset. It is easy to see that
this argument works both ways, so $M=h(P)$ is non-nesting if and only
if $P$ is dually factorial. It is well known that non-nesting matchings
are counted by the Catalan numbers. See for instance
Stanley~\cite[Ex. 6.19uu]{St2}. One way to associate a given
non-nesting matching with a Dyck path is to map its openers to
up-steps and its closers to down-steps.

\begin{proposition}
  There are exactly $C_n=\binom{2n}{n}/(n+1)$ posets on $[n]$ that are
  both factorial and dually factorial.
\end{proposition}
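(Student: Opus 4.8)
The plan is to read this off from the bijection $h\colon\F_n\to\N_n$ together with the observation, made in the paragraph immediately preceding the proposition, that for $P\in\F_n$ the matching $M=h(P)$ is non-nesting if and only if $P$ is dually factorial. Granting that biconditional, the proof is essentially a two-step combination. First I would note that a non-nesting matching is in particular free of left-nestings, so the non-nesting matchings on $[2n]$ form a subset of $\N_n$; since $h$ is a bijection $\F_n\to\N_n$, it restricts to a bijection between $\{P\in\F_n : P\text{ is dually factorial}\}$ — that is, the posets on $[n]$ that are both factorial and dually factorial — and the set of non-nesting matchings on $[2n]$.

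Then I would invoke the classical enumeration of non-nesting matchings: mapping openers to up-steps and closers to down-steps identifies non-nesting matchings on $[2n]$ with Dyck paths of semilength $n$, so there are $C_n=\binom{2n}{n}/(n+1)$ of them (see Stanley~\cite[Ex.~6.19uu]{St2}). Composing the two steps yields
$$\#\{P\text{ on }[n] : P\text{ factorial and dually factorial}\}
 \;=\;\#\{\text{non-nesting matchings on }[2n]\}\;=\;C_n,$$
which is the assertion.

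The only step that genuinely requires care is the equivalence ``$h(P)$ non-nesting $\iff$ $P$ dually factorial,'' but this is exactly the content of the discussion just above the proposition: with $\al_1,\dots,\al_n$ the arcs of $M=h(P)$ ordered by closer and $i<j$, the arcs $\al_i,\al_j$ nest precisely when $\pre(i)>\pre(j)$, equivalently precisely when some $k<_P i$ has $k\not<_P j$ — and the latter is exactly a witness to $P$ failing to be dually factorial (take $i>j$... rather, take the offending triple in the predecessor sets). Since every implication in that chain is reversible, $M$ has no nesting at all iff no such triple exists iff $P$ is dually factorial. I therefore expect no real obstacle here; the proposition is a corollary of machinery already established, and the proof is the short combination ``$h$ restricts to a bijection onto non-nesting matchings'' plus ``non-nesting matchings are Catalan.''
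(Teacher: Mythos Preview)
Your proposal is correct and follows exactly the paper's own argument: the discussion preceding the proposition establishes that $h$ restricts to a bijection from factorial posets that are also dually factorial onto non-nesting matchings on $[2n]$, and the latter are classically enumerated by $C_n$ via the Dyck-path correspondence (Stanley, Ex.~6.19uu). Aside from the slightly garbled parenthetical about the witnessing triple, your write-up matches the paper's reasoning step for step.
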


Let us mention an alternative way to prove this proposition. Bellow is
the smallest example of a factorial poset that is not dually factorial
but satisfies the condition of Proposition~\ref{prop:2+2-free}:
$$
\begin{tikzpicture}[line width=0.6pt, scale=0.55]
  \style
  \node [disc] (r1)   at ( 0,0) {};
  \node [disc] (r11)  at ( 0,-1) {};
  \node [disc] (r111) at ( 0,-2) {};
  \node [disc] (r2)   at ( 0.8,-2) {};
  \draw (r1) node[left=2pt] {4} -- (r11) node[left=2pt] {2} 
         -- (r111) node[left=2pt] {1}; 
  \draw (r2) node[right=2pt] {3}; 
\end{tikzpicture}
$$ As stated by Proposition~\ref{prop:factorial-posets-are-2+2-free},
factorial posets are $\tpt$-free; those that, in addition, are dually
factorial are $({\bf 3}+{\bf 1})$-free. 

\begin{proposition}
  If $P$ is a factorial poset satisfying~\eqref{1} from
  Proposition~\ref{prop:2+2-free}, then $P$ is dually factorial if and
  only if $P$ is $({\bf 3}+{\bf 1})$-free.
\end{proposition}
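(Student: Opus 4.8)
The plan is to prove the two implications separately; the forward one needs neither~\eqref{1} nor much work, so I would dispatch it first and then spend the effort on the converse, where~\eqref{1} is essential. For the forward implication, suppose $P$ is both factorial and dually factorial and, for contradiction, that it contains an induced $({\bf 3}+{\bf 1})$ on $\{a,b,c,d\}$ with $a<_Pb<_Pc$ and $d$ incomparable to each of $a,b,c$. Since $P$ is naturally labeled, $a<b<c$ in the usual order, and $d\ne b$, so either $d<b$ or $d>b$. If $d<b$, then $d<b<_Pc$ and factoriality give $d<_Pc$, contradicting that $d$ is incomparable to $c$; if $d>b$, then $d>b>_Pa$ and dual factoriality give $d>_Pa$, contradicting that $d$ is incomparable to $a$. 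Hence $P$ is $({\bf 3}+{\bf 1})$-free; this also reproves the remark preceding the proposition.

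For the converse I would argue the contrapositive: assume $P$ is factorial, satisfies~\eqref{1}, but is not dually factorial, and build an induced $({\bf 3}+{\bf 1})$. First, failure of dual factoriality forces a descent in the sequence $\pre(1),\dots,\pre(n)$: for a factorial poset $\Pre(j)=[1,\pre(j)]$, so if that sequence were weakly increasing then $k<_Pj$ together with $j<j'$ would give $k\le\pre(j)\le\pre(j')$, i.e.\ $k<_Pj'$, which is exactly dual factoriality. So there is an $i\in[n-1]$ with $\pre(i)>\pre(i+1)$. Set $k=\pre(i)$: then $k\ge 1$ and $k<_Pi$, while $k>\pre(i+1)$ gives $k\notin\Pre(i+1)$, i.e.\ $k\not<_Pi+1$. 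Now condition~\eqref{1} enters: since $\pre(i)>\pre(i+1)$ it forces $\suc(i)>\suc(i+1)$, so $\Suc(i)\setminus\Suc(i+1)\ne\emptyset$; pick $m$ in this set, so that $i<_Pm$ but $i+1\not<_Pm$.

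It then remains to check that $\{k,i,m,i+1\}$ induces a $({\bf 3}+{\bf 1})$. The one point requiring care is that $k<_Pi$ together with $k\not<_Pi+1$ forces $i\not<_Pi+1$ (otherwise $k<_Pi+1$ by transitivity); with the natural labeling this makes $i$ and $i+1$ incomparable and, since $i<_Pm$, also shows $m\ne i+1$, so the four elements are genuinely distinct. Then $k<_Pi<_Pm$ is a $3$-chain ($k<_Pm$ by transitivity), while $i+1$ is incomparable to $k$ and to $m$ (by the natural labeling together with $k\not<_Pi+1$ and $i+1\not<_Pm$) and to $i$ (just shown). This is the required induced $({\bf 3}+{\bf 1})$, completing the contrapositive.

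I expect the only real obstacle to be in the converse: recognizing that~\eqref{1} is precisely what produces the witness $m$, and then handling the incomparabilities carefully — in particular deducing rather than assuming that $i$ and $i+1$ are incomparable, and hence that $\{k,i,m,i+1\}$ has four distinct elements. Everything else is routine bookkeeping with the fact that predecessor sets of factorial posets are the initial segments $[1,\pre(k)]$.
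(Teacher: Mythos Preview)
Your proof is correct and follows essentially the same strategy as the paper: both directions match, and for the converse both arguments locate an adjacent pair $i,\,i+1$ with $\pre(i)>\pre(i+1)$, invoke~\eqref{1} to obtain $\suc(i)>\suc(i+1)$, and then assemble a ${\bf 3}+{\bf 1}$ from a predecessor of $i$, the pair $i,\,i+1$, and a successor of $i$ that is not a successor of $i+1$. Your route to the adjacent pair (a descent in the $\pre$-sequence) is a touch more direct than the paper's maximal/minimal choice, and you verify the incomparabilities and distinctness of the four elements more carefully than the paper does, but the underlying argument is the same.
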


\begin{proof}
  For factorial posets $P$ on less than $4$ elements the result is
  trivial. We shall assume that $P$ has at least $4$ elements and
  prove the contra-positive statement: $P$ is not dually factorial if
  and only if $P$ contains an induced subposet isomorphic to ${\bf
    3}+{\bf 1}$.  Assume that the elements $x<_P y<_P z$ and $w$ form
  an induced subposet of $P$ that is isomorphic to ${\bf 3}+{\bf
    1}$. Since $P$ is factorial we cannot have $w < y <_P z$ and $w
  \not<_P z$; thus $w>y$. Then $w>y>_P x$ and $w\not >_P x$, so $P$ is
  not dually factorial. 

  Conversely, assume that $P$ is not dually factorial. Then there
  exists $x$, $y$ and $w$ with $w>y>_P x$ but $w\not >_P x$.  Assume
  further that $y$ is maximal and then $w$ minimal with this property.
  We claim that $w=y+1$. If not, then $y+1>_P x$ by the minimality of
  $w$. Using transitivity and $w\not >_P x$ this implies $w\not >_P
  y+1$, which contradicts the maximality of $y$.  Since $w=y+1$ and
  $\pre(y)>\pre(w)$, it follows from property~\eqref{1} that
  $\suc(y)>\suc(w)$. Thus there exists a $z$ such that $z>_P y$ and
  $z\not >_P w$, and so the induced subposet on $\{x,y,z,w\}$ is
  isomorphic to ${\bf 3}+{\bf 1}$.
\end{proof}

Since posets that are both factorial and dually factorial have a
unique labeling we can regard them as unlabeled. Further, unlabeled
posets that are both $\tpt$- and $({\bf 3}+{\bf 1})$-free (also called
semiorders) are known to be enumerated by the Catalan numbers;
see~\cite[Ex. 6.19ddd]{St2} and \cite{WiFr}.

\section{Statistics and equidistributions}

One question we shall consider in this section is what statistics are
respected by the bijections $f$, $g$ and $h$. For reference, we list
the size $3$ matchings, inversion tables, permutations and posets that
correspond to each other under those bijections:
\renewcommand{\discstyle}{\tikzstyle{disc} = [
    circle,thin,fill=\cfilll,draw=black, minimum size=3.1pt, inner
    sep=0pt ] }
$$
\begin{array}{c|c|c|c|c|c}
  \begin{tikzpicture}[line width=.45pt, scale=0.27]
    \nodestyle
  
  \filldraw   (1,0) circle (1.9pt); \node at (1,-0.7) {1};
  \filldraw   (2,0) circle (1.9pt); \node at (2,-0.7) {2};
  \filldraw   (3,0) circle (1.9pt); \node at (3,-0.7) {3};
  \filldraw   (4,0) circle (1.9pt); \node at (4,-0.7) {4};
  \filldraw   (5,0) circle (1.9pt); \node at (5,-0.7) {5};
  \filldraw   (6,0) circle (1.9pt); \node at (6,-0.7) {6};
   \draw[very thin] (0.5,0) -- (6.500000,0);
  \draw (2,0) arc (0:180:0.5000);
  \draw (4,0) arc (0:180:0.5000);
  \draw (6,0) arc (0:180:0.5000);
  \end{tikzpicture}&
  \begin{tikzpicture}[line width=.45pt, scale=0.27]
    \nodestyle
  
  \filldraw   (1,0) circle (1.9pt); \node at (1,-0.7) {1};
  \filldraw   (2,0) circle (1.9pt); \node at (2,-0.7) {2};
  \filldraw   (3,0) circle (1.9pt); \node at (3,-0.7) {3};
  \filldraw   (4,0) circle (1.9pt); \node at (4,-0.7) {4};
  \filldraw   (5,0) circle (1.9pt); \node at (5,-0.7) {5};
  \filldraw   (6,0) circle (1.9pt); \node at (6,-0.7) {6};
   \draw[very thin] (0.5,0) -- (6.500000,0);
  \draw (2,0) arc (0:180:0.5000);
  \draw (5,0) arc (0:180:1.0000);
  \draw (6,0) arc (0:180:1.0000);
  \end{tikzpicture}&
  \begin{tikzpicture}[line width=.45pt, scale=0.27]
    \nodestyle
  
  \filldraw   (1,0) circle (1.9pt); \node at (1,-0.7) {1};
  \filldraw   (2,0) circle (1.9pt); \node at (2,-0.7) {2};
  \filldraw   (3,0) circle (1.9pt); \node at (3,-0.7) {3};
  \filldraw   (4,0) circle (1.9pt); \node at (4,-0.7) {4};
  \filldraw   (5,0) circle (1.9pt); \node at (5,-0.7) {5};
  \filldraw   (6,0) circle (1.9pt); \node at (6,-0.7) {6};
   \draw[very thin] (0.5,0) -- (6.500000,0);
  \draw (3,0) arc (0:180:1.0000);
  \draw (4,0) arc (0:180:1.0000);
  \draw (6,0) arc (0:180:0.5000);
  \end{tikzpicture}&
  \begin{tikzpicture}[line width=.45pt, scale=0.27]
    \nodestyle
  
  \filldraw   (1,0) circle (1.9pt); \node at (1,-0.7) {1};
  \filldraw   (2,0) circle (1.9pt); \node at (2,-0.7) {2};
  \filldraw   (3,0) circle (1.9pt); \node at (3,-0.7) {3};
  \filldraw   (4,0) circle (1.9pt); \node at (4,-0.7) {4};
  \filldraw   (5,0) circle (1.9pt); \node at (5,-0.7) {5};
  \filldraw   (6,0) circle (1.9pt); \node at (6,-0.7) {6};
   \draw[very thin] (0.5,0) -- (6.500000,0);
  \draw (3,0) arc (0:180:1.0000);
  \draw (5,0) arc (0:180:1.5000);
  \draw (6,0) arc (0:180:1.0000);
  \end{tikzpicture}&
  \begin{tikzpicture}[line width=.45pt, scale=0.27]
    \nodestyle
  
  \filldraw   (1,0) circle (1.9pt); \node at (1,-0.7) {1};
  \filldraw   (2,0) circle (1.9pt); \node at (2,-0.7) {2};
  \filldraw   (3,0) circle (1.9pt); \node at (3,-0.7) {3};
  \filldraw   (4,0) circle (1.9pt); \node at (4,-0.7) {4};
  \filldraw   (5,0) circle (1.9pt); \node at (5,-0.7) {5};
  \filldraw   (6,0) circle (1.9pt); \node at (6,-0.7) {6};
   \draw[very thin] (0.5,0) -- (6.500000,0);
  \draw (3,0) arc (0:180:1.0000);
  \draw (5,0) arc (0:180:0.5000);
  \draw (6,0) arc (0:180:2.0000);
  \end{tikzpicture}&  
  \begin{tikzpicture}[line width=.45pt, scale=0.27]
    \nodestyle
  
  \filldraw   (1,0) circle (1.9pt); \node at (1,-0.7) {1};
  \filldraw   (2,0) circle (1.9pt); \node at (2,-0.7) {2};
  \filldraw   (3,0) circle (1.9pt); \node at (3,-0.7) {3};
  \filldraw   (4,0) circle (1.9pt); \node at (4,-0.7) {4};
  \filldraw   (5,0) circle (1.9pt); \node at (5,-0.7) {5};
  \filldraw   (6,0) circle (1.9pt); \node at (6,-0.7) {6};
   \draw[very thin] (0.5,0) -- (6.500000,0);
  \draw (4,0) arc (0:180:1.5000);
  \draw (5,0) arc (0:180:1.5000);
  \draw (6,0) arc (0:180:1.5000);
  \end{tikzpicture}\\
  012 & 011 & 002 & 001 & 010 & 000 \\
  123 & 132 & 213 & 231 & 312 & 321 \\
  \pA{0.5pt}{0.4} & 
  \pE{0.5pt}{0.4} & 
  \pD{0.5pt}{0.4} & 
  \pC{0.5pt}{0.4} & 
  \pB{0.5pt}{0.4} & 
  \pF{0.5pt}{0.4}
\end{array}
$$
\renewcommand{\discstyle}{\tikzstyle{disc} = 
  [ circle,thin,fill=\cfilll,draw=black, minimum size=\ns, inner sep=0pt ] }

There are several well known ways of translating between permutations
and inversion tables. Here we have chosen the following way: Given
$\pi\in\sym_n$, we build the corresponding inversion table $w$ from
right to left. The right most letter of $w$ is $\pi^{-1}(n)-1$. The
remaining letters of $w$ are obtained by repeating this procedure on
the length $n-1$ permutation that results from $\pi$ by deleting $n$.

We shall now define the relevant statistics, and we start with
statistics on posets. The ordinal sum~\cite[\S3.2]{St1} of two posets
$P$ and $Q$ is the poset $P\oplus Q$ on the union $P\cup Q$ such that
$x\leq y$ in $P\oplus Q$ if $x\leq_P y$ or $x\leq_Q y$, or $x\in P$
and $y\in Q$. Let us say that $P$ has $k$ \emph{components}, and write
$\comp(P)=k$, if $P$ is the ordinal sum of $k$, but not $k+1$,
nonempty posets. The number of minimal elements of a poset $P$ is
denoted $\min(P)$. The number of levels of $P$---in other words, the
number of distinct predecessor sets in $P$---is denoted $\lev(P)$. A
pair of elements $x$ and $y$ in $P$ are said to be \emph{incomparable}
if $x\not\leq_Py$ and $y\not\leq_Px$. The number of incomparable pairs
in $P$ we denote by $\ip(P)$.

Let $\pi$ be a permutation. An \emph{ascent} in $\pi$ is a letter
followed by a larger letter; a \emph{descent} in $\pi$ is a letter
followed by a smaller letter. The number of ascents and descents are
denoted $\asc(\pi)$ and $\des(\pi)$, respectively. An inversion is a
pair $i<j$ such that $\pi(i)>\pi(j)$. The number of inversions is
denoted $\inv(\pi)$.  A \emph{left-to-right minimum} of $\pi$ is a
letter with no smaller letter to the left of it; the number of
left-to-right minima is denoted $\lmin(\pi)$.  The statistics
\emph{right-to-left minima} ($\rmin$), \emph{left-to-right maxima}
($\lmax$), and \emph{right-to-left maxima} ($\rmax$) are defined
similarly. For permutations $\pi$ and $\sigma$, let
$\pi\oplus\sigma=\pi\sigma'$, where $\sigma'$ is obtained from
$\sigma$ by adding $|\pi|$ to each of its letters, and juxtaposition
denotes concatenation. We say that $\pi$ has $k$ components, and write
$\comp(\pi)=k$, if $\pi$ is the sum of $k$, but not $k+1$, nonempty
permutations. Let $\dent(\pi)$ denote the number of distinct entries
of the inversion table associated with $\pi$.

For $M$ a matching on $[2m]$ and $N$ a matching on $[2n]$, let
$M\oplus N=M\cup N'$, where $N'$ is the matching on $[2m+1,2m+2n]$
obtained from $N$ by adding $2m$ to all of its openers and
closers. Let us say that $M$ has $k$ \emph{components}, and write
$\comp(M)=k$, if $M$ is the sum of $k$, but not $k+1$, nonempty
matchings. Let $\min(M)=j-1$ where $j$ is the smallest closer of $M$;
for a matching with no left nestings, $j$ is the closer of the arc with opener
$1$. Let $\last(M)$ be the number of closers that are smaller than the
opener of the last arc. Recall from Section~\ref{sec:cross-vs-nest}
that $\inter(M)$ denotes the number of intervals in the list of
openers of $M$.  Let us assume that $k$ is the closer of some arc of
$M$, and let $\al=(i,j)$ be another arc of $M$. If $i<k<j$ we say that
$k$ is \emph{embraced} by $\al$, and by $\emb(M)$ we denote the 
number of pairs $(k,\al)$ in $M$ such that the closer $k$ is embraced by $\al$.

\begin{proposition}
  Let $f$ and $g$ be as in the proofs of
  Theorems~\ref{thm:no-left-nestings} and \ref{thm:factorial-poset}.
  Let $P$ be a factorial poset on $[n]$. Let $w=g(P)$ and $M=f(w)$ be
  the corresponding inversion table and matching, respectively. Let
  $\pi$ be the permutation corresponding to $w$. Then
  $$
  \begin{array}{rlllllll}
    &(\!\!&\comp(P),&\min(P),&\pre(n),&\lev(P),&\ip(P)&\!\!)\;=\\
    &(\!\!&\comp(\pi),&\lmin(\pi),&\pi^{-1}(n)-1,&\dent(\pi),&\inv(\pi)&\!\!)\;=\\
    &(\!\!&\comp(M),&\min(M),&\last(M),&\inter(M),&\emb(M)&\!\!)
  \end{array}
  $$
\end{proposition}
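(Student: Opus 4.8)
The plan is to verify the identities column by column, working through the three bijections $g\colon\F_n\to\I_n$, $f\colon\I_n\to\N_n$, and the standard inversion-table-to-permutation correspondence in tandem. Throughout I write $w=(a_1,\dots,a_n)=g(P)$, so $a_k=\pre(k)$ by definition, and I recall from the proof of Theorem~\ref{thm:no-left-nestings} that $M=f(w)$ is built by inserting arcs $\al_1,\dots,\al_n$ (ordered by closer) one at a time, with the opener of $\al_k$ placed immediately to the left of the $(a_k+1)$st closer then present; equivalently, by the explicit inverse, $a_k$ equals the number of closers of $M$ to the left of the opener of $\al_k$.

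First I would dispatch the three ``easy'' middle entries. For $\pre(n)=\pi^{-1}(n)-1=\last(M)$: by definition $a_n=\pre(n)$, the chosen permutation correspondence has rightmost letter of $w$ equal to $\pi^{-1}(n)-1$, so $a_n=\pi^{-1}(n)-1$; and $a_n$ is exactly the number of closers left of the opener of the last arc $\al_n$, which is $\last(M)$. For $\lev(P)=\dent(\pi)=\inter(M)$: the levels of $P$ are the distinct predecessor sets, and since $P$ is factorial these are the distinct values among $a_1,\dots,a_n$, which is precisely $\dent(\pi)$. That this also equals $\inter(M)$, the number of maximal intervals of openers, needs the insertion rule: inserting $\al_k$ adds an opener immediately left of the $(a_k+1)$st closer, and one checks that this opener merges with the existing block of openers iff some earlier arc had the same $a$-value — so the number of opener-blocks tracks the number of distinct $a$-values seen so far, ending at $\dent(\pi)$. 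For $\min(P)=\lmin(\pi)=\min(M)$: a minimal element of $P$ is a $k$ with $\Pre(k)=\void$, i.e.\ $a_k=0$; one shows via the right-to-left construction of $w$ from $\pi$ that the number of zeros in $w$ equals $\lmin(\pi)$ (a new left-to-right minimum is created exactly when the current smallest element is placed first, contributing a $0$); and for the matching, $\min(M)=j-1$ where $j$ is the smallest closer, and $a_k=0$ means $\al_k$'s opener is at the extreme left before some closers — the count of such arcs equals the number of closers before the first closer, hence $j-1$. The $\comp$ row is similar in spirit: $\comp(P)=k$ means $P=P_1\oplus\cdots\oplus P_k$ maximally; one translates this to a block decomposition of $w$ (a new component starts at position $i$ iff $a_i\ge i$ suitably interpreted — more precisely $P$ splits after element $i$ iff every $a_j$ with $j>i$ exceeds $i$... wait) — the cleaner route is to note all three $\oplus$ operations (on posets, permutations via the defined $\pi\oplus\sigma$, and on matchings) are intertwined by $g$ and $f$, which follows directly from the recursive/explicit descriptions, so $\comp$ is automatically preserved.

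The genuine content is the last column, $\ip(P)=\inv(\pi)=\emb(M)$. The identity $\ip(P)=\inv(\pi)$ is the expected one: an incomparable pair in a factorial poset $P$ is a pair $i<j$ with $i\not<_P j$, i.e.\ $i>a_j$, equivalently $a_j<i$; summing over $j$ the count of indices $i<j$ with $a_j<i$ gives $\sum_j (j-1-a_j)$ after subtracting the comparable pairs $\{i\le a_j\}$, and one must match this against $\inv(\pi)=\sum (\text{something})$. The cleanest check: under the chosen $w\leftrightarrow\pi$ correspondence, $\inv(\pi)=\sum_{k}a_k$ fails in general (it depends on the convention), so I would instead verify directly from the right-to-left recursion that the number of incomparable pairs strictly below $n$ plus the new incomparable pairs involving $n$ (namely those $i<n$ with $i\not<_P n$, of which there are $n-1-\pre(n)=n-1-a_n$) matches the recursion for $\inv$. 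For $\emb(M)=\ip(P)$: $k$ is embraced by $\al=(i,j)$ when $i<k<j$ and $k$ is a closer; if $\al=\al_m$ and $k$ is the closer of $\al_\ell$, then translating through the insertion rule, ``$\al_\ell$'s closer lies strictly between $\al_m$'s opener and closer'' corresponds exactly to $\ell$ and $m$ being incomparable in $P$ (neither $\ell<_P m$ nor $m<_P\ell$, using the characterization from the excerpt that $\al_i$ below $\al_j$ in $P$ iff closer of $\al_i$ is left of opener of $\al_j$). Counting pairs $(k,\al)$ then literally counts incomparable pairs of $P$.

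The main obstacle I anticipate is not any single identity but pinning down the permutation conventions so that the middle row closes: the paper's choice of $w\leftrightarrow\pi$ (build $w$ right-to-left, rightmost letter $\pi^{-1}(n)-1$) must be reconciled with each poset/matching statistic, and in particular the $\inv(\pi)=\ip(P)$ equality and the $\comp(\pi)=\comp(P)$ equality require care because the ``component'' notion for permutations here uses the non-standard direct sum $\pi\oplus\sigma=\pi\sigma'$. I would handle the $\oplus$-compatibility once and for all — showing $g$ and $f$ each send $\oplus$ to $\oplus$ — which simultaneously settles the $\comp$ column and provides an inductive scaffold on $n$ for the remaining entries, each of which then reduces to tracking what happens when the $n$th element/arc/letter is adjoined.
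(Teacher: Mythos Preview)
Your approach is correct and, for the first four columns, essentially identical to the paper's: you argue $\oplus$-compatibility for $\comp$, count zeros of $w$ for $\min$, and read off the remaining two directly from the definitions (the paper in fact declares $\lev(P)=\dent(\pi)=\inter(M)$ ``plain'' without your opener-block argument).

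The one place you diverge is the last column. Your bijective reading of $\emb(M)=\ip(P)$ is fine, but your uncertainty about the $\inv$ convention leads you to an unnecessary induction. The paper's route is shorter: write
\[
\ip(P)=\textstyle\binom{n}{2}-\#\{(i,j):i<_P j\},\quad
\inv(\pi)=\textstyle\binom{n}{2}-\#\{\text{non-inversions}\},\quad
\emb(M)=\textstyle\binom{n}{2}-\#\{(i,j):i<j,\ \text{closer}(\al_i)<\text{opener}(\al_j)\},
\]
and observe that all three subtracted counts equal $\sum_k a_k$. The first is immediate from $a_k=\pre(k)$; the third is immediate from the description of $h^{-1}$; and for the middle one, unwinding the right-to-left construction shows $a_k$ is exactly the number of values smaller than $k$ that appear to the left of $k$ in $\pi$, so $\sum_k a_k$ counts non-inversions. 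This resolves your convention worry in one line and avoids the inductive bookkeeping.
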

\begin{proof}
  For inversion tables $u$ and $v$, let $u\oplus v=uv'$, where $v'$ is
  obtained from $v$ by adding $1+\max(u)$ to each of its letters, and
  juxtaposition denotes concatenation. It is easy to see that if
  $\sigma$ and $\tau$ are the permutations corresponding to $u$ and
  $v$, respectively, then the permutation corresponding to $u\oplus v$
  is $\sigma\oplus\tau$. Also, $f(u\oplus v)= f(u)\oplus f(v)$ and,
  for factorial posets $Q$ and $R$, $g(Q\oplus R) =g(Q)\oplus
  g(R)$. It follows that $\comp(P)=\comp(\pi)=\comp(M)$.

  The numbers $\min(P)$, $\lmin(\pi)$ and $\min(M)$ are all equal to
  the number of zeros in $w$. It is plain that $\pre(n)=\pi^{-1}(n)-1=\last(M)$
  and $\lev(P)=\dent(\pi)=\inter(M)$. It remains to show that
  $\ip(P)=\inv(\pi)=\emb(M)$. Note that
  \begin{align*}
    \ip(P)    &= \textstyle{\binom{n}{2}} 
    -\#\{(i,j): i<_Pj \},\\
    \inv(\pi) &= \textstyle{\binom{n}{2}} 
    -\#\{(i,j): i<j \text{ and } \pi(i)<\pi(j)\},
\intertext{ and, if $\al_1,\dots,\al_n$ are $M$'s arcs ordered by closer, then}
    \emb(M)   &= \textstyle{\binom{n}{2}}
    -\#\{(i,j): i<j\text{ and closer of $\al_i$ $<$ opener of $\al_j$}\}.
  \end{align*}
  It follows that $\ip(P)$, $\inv(\pi)$ and $\emb(M)$ are all equal to
  $\binom{n}{2}$ minus the sum of entries in the inversion table $w$.
\end{proof}

Let us note a few direct consequences of the above proposition.

\begin{corollary}
  The statistic $\ip$ is Mahonian on $\F_n$. That is, it has
  the same distribution as $\inv$ on $\sym_n$. Also, the statistic
  $\emb$ is Mahonian on $\N_n$.
\end{corollary}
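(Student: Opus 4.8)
The plan is to deduce this immediately from the preceding Proposition, so there is essentially nothing left to do. Recall that $g\colon\F_n\to\I_n$ and the chosen correspondence $\I_n\to\sym_n$ are bijections, as is $f\colon\I_n\to\N_n$; composing them we get bijections $\F_n\leftrightarrow\sym_n$ and $\sym_n\leftrightarrow\N_n$. The Proposition asserts in particular that whenever $P\in\F_n$, $\pi\in\sym_n$ and $M\in\N_n$ correspond to one another under these bijections, one has $\ip(P)=\inv(\pi)=\emb(M)$ (indeed each equals $\binom{n}{2}$ minus the entry-sum of the common inversion table $w$).

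First I would use the identity $\ip(P)=\inv(\pi)$ together with the fact that $\F_n\leftrightarrow\sym_n$ is a bijection: a bijection that carries one statistic to another forces the two statistics to be equidistributed, so for every $k\ge 0$ the number of $P\in\F_n$ with $\ip(P)=k$ equals the number of $\pi\in\sym_n$ with $\inv(\pi)=k$. By the definition of Mahonian adopted just above, this says precisely that $\ip$ is Mahonian on $\F_n$. Then I would run the same argument on the bijection $\sym_n\leftrightarrow\N_n$ using $\inv(\pi)=\emb(M)$ to conclude that $\emb$ is Mahonian on $\N_n$.

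The hard part is already behind us: all the combinatorial work (identifying each of $\ip$, $\inv$, $\emb$ with the entry-sum complement of $w$) was carried out in the Proposition, so no further obstacle remains. If a more quantitative statement is wanted, one may append the remark that the common distribution is the $q$-factorial $\prod_{i=1}^{n}(1+q+\cdots+q^{i-1})$, since $\inv$ is the prototypical Mahonian statistic on $\sym_n$; but this follows at once and is not needed for the corollary as phrased.
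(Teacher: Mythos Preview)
Your proposal is correct and matches the paper's approach: the paper states this corollary with no proof, treating it as immediate from the preceding Proposition, which is exactly what you do. The only content is the observation that the bijections carry $\ip$ and $\emb$ to $\inv$, and that is already recorded in the Proposition.
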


\begin{corollary}
  The statistic $\lev$ is Eulerian on the set $\F_n$. That is, it has
  the same distribution as $\des$ on $\sym_n$. Also, the statistic
  $\inter$ is Eulerian on $\N_n$.
\end{corollary}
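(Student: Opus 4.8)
The plan is to funnel both assertions through the Proposition just proved and reduce to one statement about permutations. Under the bijections $f$, $g$, $h$, if $P\in\F_n$, $w=g(P)$, $M=f(w)$ and $\pi$ is the permutation corresponding to $w$, then that Proposition gives $\lev(P)=\dent(\pi)=\inter(M)$. Since $g$ is a bijection $\F_n\to\I_n$, $f$ is a bijection $\I_n\to\N_n$, and $w\mapsto\pi$ is a bijection $\I_n\to\sym_n$, it follows that $\lev$ on $\F_n$, $\inter$ on $\N_n$ and $\dent$ on $\sym_n$ are all equidistributed. So it suffices to prove that $\dent$ is Eulerian on $\sym_n$; equivalently, since $\dent(\pi)$ is by definition the number of distinct entries of the inversion table of $\pi$, that the number of distinct entries of a member of $\I_n$ is distributed as $1+\des$ over $\sym_n$.

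For this I would argue at the level of generating functions. Set $D_n(t)=\sum_{w\in\I_n}t^{d(w)}$ with $d(w)$ the number of distinct entries of $w$, so $D_1(t)=t$. Deleting the last coordinate gives a map $\I_n\to\I_{n-1}$; conversely, starting from $w'=(a_1,\dots,a_{n-1})\in\I_{n-1}$ with $d(w')=d$, the $d$ distinct values of $w'$ form a subset of $\{0,1,\dots,n-2\}$, so of the $n$ admissible extensions $a_n\in\{0,1,\dots,n-1\}$ exactly $d$ leave the number of distinct entries equal to $d$ and the other $n-d$ raise it to $d+1$. Summing over $w'$ and recognizing the two sums, this yields
$$D_n(t)=nt\,D_{n-1}(t)+t(1-t)\,D_{n-1}'(t).$$

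It remains to compare this with the Eulerian polynomial $A_n(t)=\sum_{\pi\in\sym_n}t^{\des(\pi)}$, which satisfies $A_1(t)=1$ and the classical recurrence $A_n(t)=(1+(n-1)t)\,A_{n-1}(t)+t(1-t)\,A_{n-1}'(t)$. I would then check by induction that $D_n(t)=t\,A_n(t)$: the base case is immediate, and assuming $D_{n-1}=t\,A_{n-1}$ one has $D_{n-1}'=A_{n-1}+t\,A_{n-1}'$, so substituting into the recurrence for $D_n$ and extracting a factor $t$ reproduces exactly $(1+(n-1)t)\,A_{n-1}+t(1-t)\,A_{n-1}'=A_n$. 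Hence $\#\{w\in\I_n:d(w)=k\}=\#\{\pi\in\sym_n:\des(\pi)=k-1\}$ for all $k$, i.e.\ $\dent$ is Eulerian on $\sym_n$, and transporting this equidistribution along the bijections of the preceding Proposition gives the claims for $\lev$ on $\F_n$ and for $\inter$ on $\N_n$.

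I do not expect a real obstacle. The only point that calls for some care is the recurrence step: one must notice that the entries of $w'\in\I_{n-1}$ are all at most $n-2$, which is precisely what makes ``$n-d$ new values'' correct, and one must line up the resulting recurrence with the standard Eulerian one (including the base case $A_1(t)=1$ and the harmless shift by $1$ between $\dent$ and $\des$). A bijective proof exhibiting an explicit map on $\sym_n$ that carries $\dent$ to $1+\des$ would be more satisfying, but it is not needed for the corollary.
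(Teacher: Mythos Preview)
Your proof is correct and follows essentially the same approach as the paper: both reduce to showing that $\dent$ is Eulerian via the preceding Proposition, and both do so by deleting the last entry of an inversion table and counting which of the $n$ possible extensions preserve versus increase the number of distinct values. The only difference is packaging: the paper (citing Deutsch) records this directly as the coefficient recurrence $d(n,k)=k\,d(n-1,k)+(n-k+1)\,d(n-1,k-1)$ and recognizes it as the Eulerian recurrence, whereas you pass to the polynomial recurrence $D_n(t)=nt\,D_{n-1}(t)+t(1-t)\,D_{n-1}'(t)$ and then verify $D_n=t\,A_n$ by induction.
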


\begin{proof}
  It suffices to show that the statistic $\dent$ is Eulerian. The
  following proof is due to Emeric Deutsch (personal communication,
  May 2009). Let $d(n,k)$ be the number of inversion tables of length
  $n$ with $k$ distinct entries. Clearly, $d(n,0)=0$ for $n>0$ and
  $d(n,k)=0$ for $k>n$. We shall show that, for $0<k\leq n$,
  $$d(n,k)=kd(n-1,k) + (n-k+1)d(n-1,k-1).
  $$ (This recursion characterizes the Eulerian numbers.) Inversion
  tables of length $n$ with $k$ distinct entries fall into two
  disjoint classes: Those whose last entry is equal to at least one of
  the preceding $n-1$ entries; there are $kd(n-1,k)$ such inversion
  tables. Those whose last entry is different from the preceding $n-1$
  entries; there are $(n-(k-1))d(n-1,k-1)$ such inversion tables.
\end{proof}

Recall that $\lne(M)$ and $\rne(M)$ denote the number of left- and
right-nestings, respectively. Let $\lcr(M)$ and $\rcr(M)$ denote the
number of left- and right-crossings, respectively. The bijections
$f:\I_n\to\N_n$ and $g:\F_n\to\I_n$ that we have presented do not
specialize to the bijections presented by Bousquet-M\'elou et
al.~\cite{BCDK}. If one were to find bijections that do specialize in
the desired way, then one could also hope to prove the following
conjecture. Here we view $p=\pattern$ as a function counting the
occurrences of the pattern $p$. Also, for posets $P$, we define
$$\rne(P) = \#\{\,x\in P: \pre(x) > \pre(x+1)\;\,\text{and}\;\suc(x)=\suc(x+1)\,\}.
$$ In other words, $\rne(P)$ is the number of violations of
property~\eqref{1} of Proposition~\ref{prop:2+2-free}.

\begin{conjecture}\label{conj:distribution-of-rne}
  These three triples of statistics are equidistributed.
  $$
  \begin{array}{rlllll}    
    (\!\! & \rne, & \comp, & \min  & \!\!) &\text{on $\F_n$,} \\
    (\!\! & p,    & \comp, & \lmin & \!\!) &\text{on $\sym_n$,} \\
    (\!\! & \rne, & \comp, & \min  & \!\!) &\text{on $\N_n$.}
  \end{array}
  $$
\end{conjecture}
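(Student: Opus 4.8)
We begin by observing that the part of the conjecture that looks hardest is in fact immediate. By the discussion preceding Proposition~\ref{prop:2+2-free}, for $M\in\N_n$ with $P=h^{-1}(M)$ the arcs $\al_i$ and $\al_{i+1}$ form a right-nesting exactly when $\pre(i)>\pre(i+1)$ and $\suc(i)=\suc(i+1)$; since a right-nesting always consists of two arcs with consecutive closers, hence consecutive in the order by closer, this refines to the identity $\rne(M)=\rne(P)$. Combined with the preceding Proposition, which gives $\comp(M)=\comp(P)$ and $\min(M)=\min(P)$, the bijection $h$ already shows that $(\rne,\comp,\min)$ has the same joint distribution on $\N_n$ as on $\F_n$. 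It therefore remains to prove that $(p,\comp,\lmin)$ on $\sym_n$ is equidistributed with $(\rne,\comp,\min)$ on $\F_n$, and this is where the genuine difficulty lies.

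The plan is to transport everything to inversion tables and then reduce to connected objects. Using $\pre(k)=a_k$ and $\suc(x)-\suc(x+1)=\#\{k:a_k=x\}$, one finds that for $P$ with $g(P)=(a_1,\dots,a_n)$,
$$\rne(P)=\#\bigl\{\,x\in[n-1]:a_x>a_{x+1}\text{ and }x\notin\{a_1,\dots,a_n\}\,\bigr\},$$
the number of \emph{uncorrected descents} of the inversion table (so $\rne(P)=0$ recovers the descent correcting sequences of an earlier section). Since the standard bijection $\pi\mapsto w$ already matches $\comp$ and carries $\lmin(\pi)$ to the number of zeros of $w=g(P)$, the outstanding claim is purely about $\I_n$. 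Now both $\sym_n$ and $\F_n$ (equivalently $\I_n$) carry an ordinal/direct-sum decomposition that is respected by $f$, $g$ and by $\pi\mapsto w$; one checks routinely that $p$ and $\rne$ are additive over the summands, while $\lmin$ and $\min$ depend only on the first summand. Hence each side's trivariate generating function factors as
$$1+\Bigl(\sum_{c}t^{|c|}x^{\mathrm{st}(c)}\,u\,v^{\ell(c)}\Bigr)\Bigl(1-\sum_{c}t^{|c|}x^{\mathrm{st}(c)}\,u\Bigr)^{-1},$$
the sums running over indecomposable permutations, respectively connected factorial posets, with $u$ marking $\comp$, with $\mathrm{st}$ standing for $p$ or $\rne$, and with $v,\ell$ standing for $\lmin$ or $\min$. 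It thus suffices to prove the bivariate equidistribution of $(p,\lmin)$ over indecomposable permutations of size $n$ and $(\rne,\min)$ over connected factorial posets on $[n]$.

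For the connected case there are two plausible routes. The \emph{bijective} route is to extend the bijection of Bousquet-M\'elou et al.~\cite{BCDK} between $p$-avoiding permutations and neighbor-nesting-free matchings to a bijection defined on all of $\sym_n$ (or all of $\N_n$) that carries $p$ to $\rne$ and respects $\comp$ and $\lmin/\min$; equivalently, to replace $f$ and $g$ by variants whose composition restricts, on the Fishburn-sized subsets, to the map of~\cite{BCDK}. The \emph{algebraic} route is to set up a functional equation with one catalytic variable for $\sum t^{|\pi|}x^{p(\pi)}v^{\lmin(\pi)}$ over indecomposable $\pi$ by inserting the value $n$ in every position and tracking the occurrences of $p$ created or destroyed (as is standard for vincular patterns), together with the analogous equation for $\sum t^{|P|}x^{\rne(P)}v^{\min(P)}$ over connected factorial posets coming from the recursive ``adjoin the element $n$ with $\pre(n)=a_n$'' construction of Theorems~\ref{thm:no-left-nestings} and~\ref{thm:factorial-poset}, and then to check that the two functional equations have the same solution. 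I expect the algebraic route's main obstacle to be that adjoining $n$ affects $\rne$ non-locally: it can simultaneously alter the $\suc$-equality status at position $a_n$ and create a descent at position $n-1$, so the update rule for $\rne$ is not a plain increment, which obstructs a clean kernel-method argument; and the bijective route's obstacle is precisely the ``interesting challenge'' flagged in the introduction---the bijections of this paper provably do not specialize to those of~\cite{BCDK}, so a genuinely new construction is required. The equidistribution is readily checked by computer for small $n$, which makes the conjecture very plausible but falls short of either route above.
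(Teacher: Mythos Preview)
This statement is a \emph{conjecture} in the paper; the paper gives no proof, only the remark that it has been checked by computer for $n\le 7$. So there is no ``paper's own proof'' to compare against.

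Your opening observation is correct and is essentially built into the paper's setup: the statistic $\rne$ on $\F_n$ is defined precisely so that $\rne(h^{-1}(M))=\rne(M)$, and your check that $\al_i,\al_{i+1}$ form a right-nesting iff $\pre(i)>\pre(i+1)$ and $\suc(i)=\suc(i+1)$ is the right computation (the point being that $\suc(i)-\suc(i+1)$ equals the number of openers strictly between the $i$th and $(i{+}1)$st closers). Together with the preceding Proposition this does give the equidistribution of the first and third triples via $h$; the paper leaves this implicit rather than stating it as a separate result. Your reduction to connected objects is also sound: $p$ and $\rne$ are additive over ordinal summands, $\lmin$ and $\min$ depend only on the first summand, and the generating-function factorization you wrote follows.

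From that point on, however, you do not prove anything. You outline two possible strategies (extend the bijection of~\cite{BCDK} to all of $\sym_n$; or derive and compare catalytic functional equations), correctly name the obstacles to each, and then concede that computer verification ``falls short of either route above.'' That is an honest assessment, not a proof, and it leaves the conjecture exactly where the paper leaves it: open. In short, there is no gap to point to beyond the one you yourself flag---the hard equidistribution $(p,\comp,\lmin)\sim(\rne,\comp,\min)$ remains unproved both in your proposal and in the paper.
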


We also conjecture these additional equidistributions:

\begin{conjecture}\label{conj:distribution-of-rne-variant}
  These three triples of statistics are equidistributed.
  $$
  \begin{array}{rlllll}
    (\!\! & \rne, & \min,  & \lev-1,   & \!\!) &\text{on $\F_n$,} \\
    (\!\! & p,    & \lmax, & \des,     & \!\!) &\text{on $\sym_n$,} \\
    (\!\! & \rne, & \min,  & \inter-1, & \!\!) &\text{on $\N_n$.}
  \end{array}
  $$
\end{conjecture}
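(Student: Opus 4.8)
The plan is to separate the three-way equidistribution into the two halves $\F_n\leftrightarrow\N_n$ and $\F_n\leftrightarrow\sym_n$, the first being immediate and the second being the real content. For the first half I would use the bijection $h=f\circ g\colon\F_n\to\N_n$ directly. Put $w=g(P)$ and $M=h(P)$. The discussion in Section~\ref{sec:cross-vs-nest} (the paragraph preceding Proposition~\ref{prop:2+2-free}) shows that a pair of arcs $\al_a,\al_{a+1}$ of $M$ forms a right-nesting exactly when $\pre(a)>\pre(a+1)$ and $\suc(a)=\suc(a+1)$ in $P$; since every right-nesting of $M$ occurs between a consecutive pair $\al_a,\al_{a+1}$ and these are precisely the violations of~\eqref{1}, we get $\rne(M)=\rne(P)$. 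Moreover an earlier Proposition identifies $\min(P)$ and $\lev(P)$ with, respectively, the number of zeros and the number of distinct entries of $w$, and likewise $\min(M)$ with the number of zeros of $w$ and $\inter(M)$ with the number of distinct entries of $w$. Hence $h$ carries $(\rne,\min,\lev-1)$ on $\F_n$ verbatim to $(\rne,\min,\inter-1)$ on $\N_n$, and the first and third lines of the conjecture are equidistributed for free.

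It remains to match $(\rne,\min,\lev-1)$ on $\F_n$ with $(p,\lmax,\des)$ on $\sym_n$. Pulling back along $g$, one checks that $\rne(P)$ counts exactly those positions $i\in[n-1]$ of the inversion table $w=(a_1,\dots,a_n)$ with $a_i>a_{i+1}$ for which the value $i$ does not occur among $a_1,\dots,a_n$ (because $\suc(i)=\suc(i+1)$ if and only if no $a_k$ equals $i$); this is the complement of the descent-correcting condition. Thus $(\rne,\min,\lev-1)$ on $\F_n$ is the triple (number of uncorrected descents of $w$, number of zeros of $w$, number of distinct entries of $w$ minus one) on $\I_n$, and the task becomes the construction of a bijection $\I_n\to\sym_n$ sending this triple to $(p,\lmax,\des)$. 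The natural candidate is a refinement of the Bousquet-M\'elou et al.\ bijection between matchings with no neighbor nestings and $p$-avoiding permutations: one would build the permutation by a recursion (inserting the values in increasing order, or inserting the letter $n$ last) that shadows the arc-insertion recursion defining $f$, set up so that the $i$th insertion creates an occurrence of $p$ precisely when it creates a right-nesting, and so that the number of zeros and the number of distinct entries of $w$ are read off the permutation as $\lmax$ and $\des+1$. Concretely I would first determine, for the insertion of a new maximum into a permutation, how $p$, $\lmax$ and $\des$ change as a function of the chosen gap, and then look for a bijection between the $n$ gaps and the $n$ choices for $a_n\in[0,n-1]$ reproducing the corresponding changes in $\rne$, the number of zeros, and the number of distinct values; the same bookkeeping can be run directly on $\N_n$ with the help of $\psi$ and Lemma~\ref{L:structure}.

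A non-bijective alternative is to compare generating functions: derive a functional equation for $\sum_{M\in\N_n}x^{\rne(M)}y^{\min(M)}z^{\inter(M)}$ from the last-arc insertion in the proof of Theorem~\ref{thm:no-left-nestings}, do the same for $\sum_{\pi\in\sym_n}x^{p(\pi)}y^{\lmax(\pi)}z^{\des(\pi)}$ via insertion of $n$, and show the two recursions agree. One could also hope to deduce the conjecture from Conjecture~\ref{conj:distribution-of-rne} together with a distribution-preserving transformation of $\sym_n$ converting $(p,\comp,\lmin)$-refined data into $(p,\lmax,\des)$-refined data, but no standard symmetry (reverse, complement, inverse) preserves the bivincular pattern $p$, so this seems no easier than a direct attack.

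The main obstacle is precisely the one the authors flag just before stating these conjectures: the maps $f$, $g$ and the naive inversion-table-to-permutation map (which yields $\lmin$, not $\lmax$, and does not keep track of $p$) do not specialize to the Bousquet-M\'elou et al.\ bijection, so a genuinely new encoding is required. The crux of the difficulty is that $p$ is a bivincular pattern whose occurrences depend on global features of the permutation, while $\rne$ is a local statistic of the matching or poset; the bijection must therefore translate local data into global pattern data, and verifying any candidate will demand a delicate induction showing that each insertion step changes $p$ by exactly the amount it changes $\rne$. Until such a refinement of the Bousquet-M\'elou et al.\ bijection is found, the $\F_n\leftrightarrow\sym_n$ half, and hence the conjecture, remains open; the $\F_n\leftrightarrow\N_n$ half is immediate as above.
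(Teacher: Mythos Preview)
The statement you were asked to prove is presented in the paper as a \emph{conjecture}, not a theorem: the authors give no proof and only report a computer check for $n\le 7$. Your write-up is therefore exactly right in spirit. You correctly separate off the $\F_n\leftrightarrow\N_n$ equivalence and prove it: the identification $\min(P)=\min(M)$ and $\lev(P)=\inter(M)$ is the content of the statistics proposition, and your argument that $\rne(P)=\rne(M)$ under $h$ is valid (the key point, that $\suc(i)=\suc(i+1)$ in $P$ is equivalent to the closers of $\al_i$ and $\al_{i+1}$ being adjacent integers in $M$, follows from the description of $h^{-1}$). The paper does not isolate this easy half as a separate observation, so here you actually go a step beyond what is written.

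For the $\F_n\leftrightarrow\sym_n$ half you sketch plausible strategies but, appropriately, do not claim a proof; this is consistent with the paper, which explicitly flags the absence of a bijection specializing to the Bousquet-M\'elou et al.\ map as the obstruction. In short: there is no ``paper's own proof'' to compare against, and your assessment that the conjecture remains open (with the $\F_n\leftrightarrow\N_n$ part settled) matches the paper's position.
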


Conjectures~\ref{conj:distribution-of-rne} and
\ref{conj:distribution-of-rne-variant} have been checked by computer
for $n\leq 7$.

\section{Two additional conjectures and a generalization}

\begin{conjecture}\label{conj:2-left}
  Assume that $i<j<k<\ell$. Let us say that the arcs $(i,\ell)$ and
  $(j,k)$ are $m$-left-nesting if $j-i\leq m$. Note that a
  $1$-left-nesting is the same as a left-nesting. This conjecture
  claims that among all the matchings on $[2n]$ there are exactly
  $\fish_n$ that have no $2$-left-nestings.
\end{conjecture}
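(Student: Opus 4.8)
The plan is to prove Conjecture~\ref{conj:2-left} by induction on $n$, deleting from each matching the arc whose closer is the rightmost point $2n$ (``the last arc'', as in Section~\ref{sec:cross-vs-nest}), and controlling the fibres of this deletion map by the statistic $\inter$.

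First I would show that deleting the last arc sends matchings on $[2n]$ with no $2$-left-nesting to matchings on $[2n-2]$ with no $2$-left-nesting --- deleting an arc creates no nesting, and the relabelling only increases the distance between openers --- and then count the fibres. Fix a matching $M'$ on $[2n-2]$ with no $2$-left-nesting, $n\ge 2$. Every $M$ on $[2n]$ restricting to $M'$ is obtained by placing a new closer at position $2n$ and inserting a new opener at some $o\in\{1,\dots,2n-1\}$; since $2n$ is rightmost, the new arc $(o,2n)$ can only be the outer arc of a $2$-left-nesting, and such a nesting occurs exactly when one of the (at most two) points immediately to the right of $o$ is an opener of $M$. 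Translating back to $M'$, the insertion is legal precisely when $o\in\{2n-2,2n-1\}$, or $o\le 2n-3$ and positions $o,o+1$ are both closers of $M'$; as $M'$ has $(n-1)-\inter(M')$ adjacent closer-pairs, there are exactly $n+1-\inter(M')$ legal extensions. Writing $g_n$ for the number of matchings on $[2n]$ with no $2$-left-nesting, we get $g_0=g_1=1$ and, for $n\ge 2$, $g_n=\sum_{M'}\bigl(n+1-\inter(M')\bigr)$ over matchings $M'$ on $[2n-2]$ with no $2$-left-nesting. (Already this yields $g_1,g_2,g_3,g_4=1,2,5,15$.)

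It remains to evaluate this recursion, for which one must refine it by $\inter$. From the count above, a legal extension $M$ of $M'$ satisfies $\inter(M)\in\{\inter(M'),\inter(M')+1\}$, with $\inter$ unchanged exactly when the new opener lands immediately to the right of an old opener (merely lengthening an opener-block); and one finds that the number of such $\inter$-preserving extensions depends on $\inter(M')$ together with the number of singleton closer-blocks of $M'$. So the plan is to derive a joint recursion for $A_n(q,x)=\sum_M q^{\inter(M)}x^{c_1(M)}$, the sum over matchings $M$ on $[2n]$ with no $2$-left-nesting and $c_1(M)$ the number of singleton closer-blocks, and to match it with the analogous refinement of the Fishburn numbers --- most naturally with $\sum_{T\in\T_n}q^{\,\mathrm{size}(T)}$, which specialises at $x=1$ to $\fish_n$ by Theorem~\ref{thm:no-neighbor-nestings} and the matrix model of Dukes and Parviainen (the refined data agrees with $\T_n$ for $n\le 4$). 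Equivalently, one may attempt a direct bijection with ascent sequences: recording at the $i$th step of the construction which of the $n+1-\inter$ legal slots is used --- ordered so that the $\inter$-raising slots come first --- yields a word $(b_1,\dots,b_n)$, and the statement to prove is that this word is always an ascent sequence, i.e.\ that the invariant $1+\asc(b_1,\dots,b_{i-1})=i-\inter(M_{i-1})$ is preserved throughout.

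I expect the entire difficulty to be concentrated in this last step: the reduction above and the bare recursion $g_n=\sum_{M'}(n+1-\inter(M'))$ are routine, but closing the refined recursion (equivalently, verifying the invariant above) hinges on an identity relating $\inter$ of the partially built matching, and the auxiliary closer-block statistic, to the number of ascents of the word built so far, and this auxiliary statistic evolves in a mildly non-local way under arc insertion. Finding the right auxiliary statistic --- or a cleaner global bijection from $2$-left-nesting-free matchings to $\T_n$, to ascent sequences, or to non-neighbor-nesting matchings --- is the crux.
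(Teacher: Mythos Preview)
The paper does not prove this statement: it is stated as a conjecture, and the only information given is the note added in proof that Levande (arXiv:1006.3013) has since supplied a proof. There is therefore no in-paper argument to compare your proposal against, and I can only assess the proposal on its own terms.

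Your fibre count is correct. For $n\ge 2$, the legal insertion positions for the new opener are exactly the two rightmost positions together with those $o\le 2n-3$ for which the old positions $o$ and $o+1$ are both closers of $M'$; since $M'$ has $(n-1)-\inter(M')$ adjacent closer pairs, this gives $n+1-\inter(M')$ legal extensions, and the values $g_1,\dots,g_4=1,2,5,15$ are a good sanity check.

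There is, however, a slip in your justification that deletion preserves the no-$2$-left-nesting property. You write that ``the relabelling only increases the distance between openers,'' but deleting the opener at position $o$ and relabelling \emph{decreases} (or preserves) the distance between any two remaining points, so a priori a nesting at opener-distance $3$ in $M$ could become a $2$-left-nesting in $M'$. The claim is nonetheless true, for a slightly subtler reason: if openers $i<j$ are at distance $3$ in $M$ and collapse to distance $2$ in $M'$, then $o\in\{i+1,i+2\}$, and in either case the last arc $(o,2n)$ together with the inner arc $(j,k)$ already forms a $2$-left-nesting in $M$ (with $j-o\le 2$), a contradiction.

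The more serious point is that the proposal is, by your own admission, not a proof. You have correctly reduced the conjecture to the identity
\[
\fish_n \;=\; \sum_{M'} \bigl(n+1-\inter(M')\bigr)\qquad(n\ge 2),
\]
the sum over $M'$ on $[2n-2]$ with no $2$-left-nesting, but you have not established it. Both suggested routes --- closing a refined recursion in $(\inter,c_1)$, or reading off an ascent sequence from the insertion history --- are plausible but only sketched. In particular, the invariant you would need for the ascent-sequence bijection (that the boundary between $\inter$-raising and $\inter$-preserving slots at step $i$ sits exactly at $b_{i-1}$) is not obvious, and the number of $\inter$-preserving slots depends on $M_{i-1}$ through more than just $\inter$, as you yourself note. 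So what you have is a correct reduction and a reasonable plan of attack, not yet a proof.
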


\begin{conjecture}\label{conj:eulerian}
  The distribution of $\lne$ over the set of all matchings on $[2n]$
  is given by the ``Second-order Eulerian triangle'', entry A008517 in
  OEIS~\cite{sloane}.
\end{conjecture}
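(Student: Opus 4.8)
The plan is to determine the polynomial $N_n(t)=\sum_{M\in\M_n}t^{\lne(M)}$ by a recurrence and then match that recurrence against the one defining A008517. Write $m(n,k)$ for the number of matchings $M\in\M_n$ with $\lne(M)=k$. Recall that the second-order Eulerian triangle $T(n,k)$ ($1\le k\le n$) is determined by $T(1,1)=1$ and $T(n,k)=k\,T(n-1,k)+(2n-k)\,T(n-1,k-1)$, and that its $n$th row sums to $(2n-1)!!=|\M_n|$. Setting $m(n,k)=T(n,n-k)$ turns this into
$$m(n,k)=(n+k)\,m(n-1,k)+(n-k)\,m(n-1,k-1),\qquad m(1,0)=1,$$
so it suffices to prove that $m(n,k)$ obeys this last recurrence; the conjecture (with the $n$th row of A008517 read from right to left) then follows by induction, the base case being the single matching $\{(1,2)\}$.

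To establish the recurrence I would use the standard decomposition $\M_n\cong\M_{n-1}\times\{0,1,\dots,2n-2\}$: given $M\in\M_n$, let $o$ be the opener matched to the maximal point $2n$, delete $o$ and $2n$, relabel the remaining $2n-2$ points order-preservingly to obtain $M'\in\M_{n-1}$, and record the \emph{slot} $o-1$, namely the number of surviving points to the left of $o$. Inversely, from $M'$ and a slot $p$ one reinserts a new opener immediately after the $p$th point of $M'$ and attaches to it a new global maximum as its closer; there are $2n-1$ slots. The heart of the matter is to see how $\lne$ changes under this insertion. Fixing $M'$ with $\lne(M')=k$ and inserting into slot $p$ to obtain $M$, one checks three things: inserting a single point can only destroy the consecutiveness of two points and never create it, so no left-nesting among the ``old'' arcs is ever created; a left-nesting of $M'$ is destroyed precisely when slot $p$ lies between the two (necessarily consecutive) openers of that left-nesting, which happens for exactly $k$ slots; and the new arc $(o,2n)$ becomes the outer arc of a unique new left-nesting exactly when the point just to the right of $o$ — the $(p+1)$st point of $M'$ — is an opener, while it can never be the inner arc of a left-nesting since its closer is the global maximum. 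Writing $L(p),G(p)\in\{0,1\}$ for the ``destroyed'' and ``created'' indicators, we get $\lne(M)=\lne(M')+G(p)-L(p)$; and since $L(p)=1$ forces the $(p+1)$st point to be an opener, hence $G(p)=1$, the statistic $\lne$ never decreases under insertion.

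It then remains to count slots. The slots with $G(p)=L(p)$, i.e.\ those preserving $\lne$, fall into three disjoint families: the rightmost slot $p=2n-2$; the $n-1$ slots sitting immediately to the left of a closer of $M'$; and the $k$ slots lying between the two openers of a left-nesting of $M'$ (for which $G(p)=L(p)=1$). Disjointness is immediate from inspecting whether the $p$th and $(p+1)$st points of $M'$ are openers or closers. Hence exactly $1+(n-1)+k=n+k$ of the $2n-1$ slots preserve $\lne$, and the remaining $n-1-k$ raise it by one, which is exactly the displayed recurrence; this finishes the argument.

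The substantive part of all this is the case analysis of the last two paragraphs: showing that the insertion creates at most one new left-nesting and destroys at most one old one, that a destroyed left-nesting always forces a created one (so that $\lne$ is monotone under insertion), and that precisely $n+k$ slots are neutral. The rest is bookkeeping — tracking the order-preserving relabeling carefully, and reconciling index conventions so that ``$\lne$ is distributed as A008517'' is understood as the $n$th row of that triangle read in reverse.
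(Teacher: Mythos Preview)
The paper does not actually prove this statement: it is listed as a conjecture, checked by computer for $n\le 7$, with a note added in proof that Paul Levande subsequently found a proof (arXiv:1006.3013). So there is no proof in the paper to compare yours against.

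That said, your argument is correct. The bijection $\M_n\cong\M_{n-1}\times\{0,\dots,2n-2\}$ via deleting the arc with closer $2n$ is standard, and your analysis of how $\lne$ changes is sound: inserting the new closer at position $2n$ is harmless; inserting the new opener can destroy at most one old left-nesting (the one whose two consecutive openers it separates, giving $L(p)$) and can create at most one new left-nesting (with the point immediately to its right, if that point is an opener, giving $G(p)$); and $L(p)=1$ forces the $(p{+}1)$st point to be an opener, hence $G(p)=1$. Your slot count is also right: the $G(p)=0$ slots are the $n-1$ slots just left of a closer together with the rightmost slot $p=2n-2$, and the $G(p)=L(p)=1$ slots are exactly the $k$ slots sitting between the two openers of a left-nesting of $M'$; these families are disjoint because they are distinguished by whether point $p+1$ exists and is an opener or a closer. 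This gives $n+k$ neutral slots and $n-1-k$ slots raising $\lne$ by one, which yields
\[
m(n,k)=(n+k)\,m(n-1,k)+(n-k)\,m(n-1,k-1),
\]
matching the recursion for $T(n,n-k)$ in A008517. One small expository point: in your final sentence you say ``the remaining $n-1-k$ raise it by one, which is exactly the displayed recurrence''; strictly speaking those slots feed into $m(n,k+1)$, and the coefficient $n-k$ in the recurrence comes from applying the same count to $M'$ with $\lne(M')=k-1$. This is just a matter of phrasing and does not affect the correctness of the argument.
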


Conjectures~\ref{conj:2-left} and
\ref{conj:eulerian} have been checked by computer
for $n\leq 7$.\medskip

\noindent
{\bf Note added in proof:} Paul Levande has found proofs for
Conjectures \ref{conj:2-left} and \ref{conj:eulerian}. See his
preprint arXiv:1006.3013.

\begin{problem}
  Consider the following generalization of factorial posets. Let
  $P$ and $Q$ be labeled posets on $[n]$ such that 
  $i<_P j\implies i<_Qj$. If, in addition, 
  $$ i<_Qj<_Pk\implies i<_P k
  $$ then we say that $P$ is \emph{$Q$-factorial}. Note that
  $\n$-factorial coincides with factorial, where $\n$ is the
  $n$-chain. Note also that $Q$ itself is always a $Q$-factorial poset
  and it is the only one if $Q$ is an antichain. Is this
  generalization useful? How many $Q$-factorial posets are there?
\end{problem}

\section*{Acknowledgment}

Thanks to Mark Dukes for helpful suggestions on the presentation.

\end{document}